\def\marg#1#2{\def\marginnotetextwidth{\the\textwidth}\marginnote{\bf #1}{\bf #2}}
\numberwithin{equation}{section}
\newtheorem{theorem}{Theorem}[section]
\newtheorem{theorem*}{Theorem}
\newtheorem{prop}[theorem]{Proposition}
\newtheorem{lemma}[theorem]{Lemma}
\newtheorem{cor}[theorem]{Corollary}
\theoremstyle{definition}
\newtheorem{question}[theorem]{Question}
\newtheorem{example}[theorem]{Example}
\newtheorem{definition}[theorem]{Definition}
\newtheorem*{definition*}{Definition}
\newtheorem{remark}[theorem]{Remark}
\newcommand{\ch}{{\operatorname{char}}}
\newcommand{\Aut}{{\operatorname{ Aut}}}
\newcommand{\Inner}{{\operatorname{ Inn}}}
\newcommand{\Out}{{\operatorname{ Out}}}
\newcommand{\AIAut}{{\operatorname{ AIAut}}}
\newcommand{\AID}{{\operatorname{ AID}}}
\newcommand{\Der}{{\operatorname{ Der}}}
\newcommand{\ad}{{\operatorname{ ad}}}
\newcommand{\Autc}{{\operatorname{ Aut_{\text{\rm{c}}}}}}
\newcommand{\Lie}{{\operatorname{ Lie}}}
\newcommand{\diag}{{\operatorname{ diag}}}
\newcommand{\Sym}{{\operatorname{ Sym}}}
\newcommand{\FSym}{{\operatorname{ FSym}}}
\newcommand{\End}{{\operatorname{ End}}}
\newcommand{\Ham}{{\operatorname{ Ham}}}
\newcommand{\Span}{{\operatorname{Span}}}
\newcommand{\ph}{\varphi}
\font\cyr=wncyr10 scaled \magstep1%
\def\Sha{\text{\cyr Sh}}
\def\g{\mathfrak g}
\begin{document}

\title{Tate--Shafarevich groups and algebras}

\author{Boris Kunyavski\u\i , Vadim Z. Ostapenko}

\subjclass{17B40 17B56 20J06 22E60}

\keywords{Lie algebra, associative algebra, derivation, cohomology}


\address{Kunyavski\u\i : Department of
Mathematics, Bar-Ilan University, 5290002 Ramat Gan, ISRAEL}
\email{kunyav@macs.biu.ac.il}

\address{Ostapenko:  Department of
Mathematics, Bar-Ilan University, 5290002 Ramat Gan, ISRAEL}
\email{ostap@math.biu.ac.il}

\begin{abstract}
The Tate--Shafarevich set of a group $G$ defined by Takashi Ono coincides, in the case where
$G$ is finite, with the group of outer class-preserving automorphisms of $G$ introduced by
Burnside. We consider analogues of this important group-theoretic object for Lie algebras
and associative algebras and establish some new structure properties thereof. We also discuss
open problems and eventual generalizations to other algebraic structures.
\end{abstract}

\thanks{This research was supported by the ISF grant 1994/20. A substantial part
of this work was done during the visit of the first named author to the MPIM (Bonn). The work
was finished during his visit to the IHES (Bures-sur-Yvette).
Support of these institutions is gratefully acknowledged.}

\maketitle

\section{Introduction} \label{sec:intro}

A starting point of this research is the following purely group-theoretic notion.
Let $G$ be an abstract group acting on itself by conjugation, and let $H^1(G,G)$
denote the first group cohomology corresponding to this action.

\begin{definition}
The (pointed) set
\begin{equation} \label{Sha-gr}
\Sha(G):=\ker \left[ H^1(G,G)\to \prod_{C<G \text{ cyclic }}H^1(C,G) \right] 
\end{equation}
is called the {\bf Tate--Shafarevich set} of $G$.
\end{definition}

The definition and the name were introduced by Takashi Ono \cite{On1}, \cite{On2}.
The local-global flavour justifies the allusion to the object bearing the
same name which appeared in the arithmetic-geometric context (related to the
action of the absolute Galois group of a number field $K$ on the group $A(\overline K)$
of $\overline K$-points of an abelian $K$-variety $A$).
Recall that the usage of the Cyrillic letter  $\Sha$ (``Sha'') in this notation was initiated by Cassels
because of its appearance as the first letter in the surname of Shafarevich.

Formula \eqref{Sha-gr} admits a more down-to-earth interpretation, attributed in \cite{On2}
to Marcin Mazur (note that it appeared implicitly in an earlier paper by Chih-Han Sah \cite{Sah}).
For the reader's convenience, we reproduce this argument.

Recall that 1-cocycles $Z^1(G,G)$ are crossed homomorphisms, i.e.
maps $\psi\colon G\to G$ with the property
$$
\psi (st) = \psi (s) {}^s\psi (t) = \psi (s) s \psi (t) s^{-1}.
$$
Then the correspondence $\psi(s)\mapsto f(s) = \psi(s) \cdot s$ gives a bijection between
$Z^1(G,G)$ and $\End(G)$.
Under this correspondence, 1-coboundaries correspond to inner automorphisms.
Further, a 1-cocycle whose cohomology class becomes trivial after restriction to every cyclic
subgroup corresponds to an almost inner (=locally inner=pointwise inner=class preserving) endomorphism,
i.e. $f\in\End(G)$ with the property $f(g)=a^{-1}ga$ (where $a$ depends on $g$).
Note that any class preserving endomorphism is injective. If $G$ is finite, it
is also surjective, and we arrive at the object introduced by Burnside \cite{Bur1} more than 100 years ago:
$$\Sha (G)\cong \AIAut(G)/\Inner(G),
$$
where $\AIAut(G)$ (sometimes denoted by $\Autc(G)$) stands for the group of almost inner automorphisms of $G$.
In particular, if $G$ is finite, $\Sha (G)$ is a group, not just a pointed set.
(Ono \cite{On2} extended this to the case where $G$ is profinite.)

There are many classes of groups $G$ with trivial $\Sha (G)$, see the surveys \cite{Ku1}, \cite{Ya}
where such groups are called $\Sha$-rigid. One can also find there some interesting examples
with nontrivial $\Sha (G)$. Such examples often give rise to counter-examples to some difficult problems,
such as Higman's problem on isomorphism of integral group rings.

Our first goal is to study the Lie-algebraic analogue of $\Sha (G)$, emphasizing its cohomological
nature and local-global flavour. This analogue, under different names, appeared in the literature.
In the pioneering work by Carolyn Gordon and Edward Wilson \cite{GW}, this object was studied
in the differential-geometric context, allowing them to produce a continuous family of isospectral non-isometric
compact Riemann manifolds. Recently, the interest to these Lie-algebraic structures was revived in the series of papers
by Farshid Saeedi and his collaborators \cite{SMSB}, \cite{SMS1}, \cite{SMS2}, and also in the series of papers by
Burde, Dekimpe and Verbecke \cite{BDV1}, \cite{BDV2}, \cite{BDV3}.

In Section \ref{sec:lie}, we consider this {\bf Tate--Shafarevich Lie algebra} $\Sha (\g)$ and its generalization
$\Sha (\g,M)$, where $\g$ is a finite-dimensional Lie algebra and $M$ is a $\g$-module. Our main contribution here
is the proof of the fact that $\Sha (\g)$ is an ideal in the Lie algebra $\Der (\g)$ of outer derivations of $\g$
in the case where $\g$ is nilpotent. This gives a partial answer to a question posed in \cite{BDV1}.

Our next aim is to extend the parallelism between groups and Lie algebras by considering associative algebras and
introducing analogous objects. For such an algebra $A$, in Section \ref{sec:ass}, we define two analogues of
$\Sha (G)$ and $\Sha (\g)$, additive $\Sha_{\text{\rm{a}}}(A)$ and multiplicative $\Sha_{\text{\rm{m}}}(A)$,
and study their simplest properties and relations to the objects defined earlier. We hope that
these new local-global invariants of associative algebras will prove useful, as their predecessors.

Finally, in Section \ref{sec:par}, we overview some open problems arising from the parallelism
among the objects of the triad consisting of Lie algebras, associative algebras, and groups.
We also speculate on extending this parallelism from Lie algebras to other algebraic structures,
such as Malcev algebras, Leibniz algebras, Poisson algebras, and the corresponding triads whenever
they exist.

\medskip

\noindent
{\it Notation and conventions.}
Unless stated otherwise, $k$ is an arbitrary field of
characteristic zero,
$\g$ is a finite-dimensional Lie $k$-algebra,
$A$ is an associative $k$-algebra, and
$M$ is either a $\g$-module, or an $A$-bimodule,
depending on the context.

\section{Lie algebras} \label{sec:lie}

\subsection{Preliminaries} \label{subsec:lie-prelim}
Let $\g$ be a Lie algebra over $k$, and let $M$ be a (left) $\g$-module, i.e. $M$ is a vector $k$-space
and there exists a $k$-bilinear map
$$
\g\times M \to M, \quad (g,m)\mapsto g\circ m,
$$
such that
$$
[g,h]\circ m = g\circ (h\circ m) - h\circ (g\circ m)
$$
for all $g,h\in \g, m\in M$. In particular, $M=\g$
is a $\g$-module with respect to the adjoint action $g\circ k=[g,k]$
because of the Jacobi identity.

Further, recall that a derivation $D\colon \g\to M$ is a $k$-linear map
such that
\begin{equation} \label{der-module-lie}
D([g,h])=g\circ D(h) - h\circ D(g)
\end{equation}
for all $g,h\in \g$. For a given $m\in M$, the map
$$
D_m\colon \g\to M, \quad g\mapsto g\circ m,
$$
is a derivation. Such derivations are called inner. We denote by
$\Der (\g,M)$ the set of all derivations and by $\ad (\g,M)$ the
set of all inner derivations. Clearly, they are both vector $k$-spaces,
and $\ad (\g ,M)$ is a $k$-subspace of $\Der (\g,M)$. Let $\Out (\g ,M)=\Der (\g,M)/\ad (\g,M)$
denote the quotient vector space. It is well known that $\Out (\g ,M)$ is the first
Chevalley--Eilenberg cohomology $H^1(\g,M)$.

In the special case
$M=\g$,  formula \eqref{der-module-lie} is the usual Leibniz rule.
We abbreviate the notation $\Der (\g,\g)$, $\ad (\g,\g)$ and $\Out (\g ,M)$ to
$\Der (\g)$, $\ad (\g)$ and $\Out (\g )$, respectively. The first two vector spaces acquire a natural Lie algebra
structure defined by the Lie bracket $[D,D']=DD'-D'D$. Therefore, since $\ad (\g)$ is a Lie ideal
of $\Der (\g )$, $\Out (\g )$ also carries a Lie algebra structure. It is
the Chevalley--Eilenberg cohomology $H^1(\g,\g)$ related to the adjoint action of $\g$.

\begin{definition} \label{def:lid-lie}
$$
\AID (\g,M):= \{D\in \Der (\g,M) \quad | \quad (\forall g\in \g) \quad  (\exists m\in M) \quad D(g)=g\circ m\}.
$$
(Here $m$ may depend on $g$.) We call elements of $\AID (\g,M)$ {\bf{almost inner derivations}}
of $\g$ with coefficients in $M$.
\end{definition}

\begin{remark}
Perhaps a more appropriate name for objects introduced in Definition \ref{def:lid-lie} would be locally inner derivations.
It would better reflect their local-global flavour. We have chosen another name, following \cite{GW} and \cite{BDV1},
in order to avoid notational collisions. First, locally inner derivations are used in the theory of Banach algebras (having a different
meaning). Second, this term is too close to local derivations, which are yet another object, intensely studied
over past years and having important applications. 
\end{remark}

Clearly, $\AID (\g,M)$ is a subspace of $\Der (\g ,M)$ and $\ad (\g,M)$ is a subspace of $\AID (\g,M)$,
so we define
$$
\Sha (\g,M):=\AID (\g,M)/\ad (\g ,M).
$$
This quotient is a subspace of $\Out (\g,M)$.

As above, we shorten $\AID (\g):=\AID (\g,\g)$, and so on.
These algebras were considered in \cite{GW}, \cite{SMSB}, \cite{BDV1}.
As mentioned in Section \ref{sec:intro},
algebras $\g$ with nonzero $\Sha (\g)$ exhibited in the aforementioned papers often reveal important
geometric phenomena, see \cite{GW} for details. Note that $\AID (\g )$ inherits the Lie algebra structure from
$\Der (\g)$ (see \cite[Proof of Proposition 2.3]{BDV1}), $\ad (\g )$ is a Lie ideal in $\AID (\g )$, and hence
$\Sha (\g )$ also carries a natural Lie algebra structure.

\begin{definition} \label{def:sha-lie}
We call $\Sha (\g)$ the {\bf {Tate--Shafarevich algebra}} of $\g$.
\end{definition}

\subsection{Properties}
We start with a basic structural question posed in \cite{BDV1}.

\begin{question} \label{q:ideal-lie}
Is $\AID(\g)$ an ideal of $\Der (\g)$?
\end{question}

If this question is answered in the affirmative, we conclude that $\Sha (\g)$
is an ideal of $\Out (\g)$.

So far, Question \ref{q:ideal-lie} is wide open. The next result can be viewed as a first step.

\begin{theorem} \label{th:ideal-lie}
Let $\g$ be a finite-dimensional nilpotent Lie algebra over $k=\mathbb C$.
Then  $\AID(\g)$ is an ideal of $\Der (\g)$, and hence $\Sha (\g)$
is an ideal of $\Out (\g)$.
\end{theorem}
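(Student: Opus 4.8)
The plan is to show that for any $D \in \AID(\g)$ and any $E \in \Der(\g)$, the bracket $[D,E] = DE - ED$ again lies in $\AID(\g)$; that is, for each $g \in \g$ we must exhibit some $x \in \g$ with $[D,E](g) = [x,g]$. Fix $g$. Since $D$ is almost inner, for every $h \in \g$ there is $a_h \in \g$ with $D(h) = [a_h, h]$. The naive attempt is to compute $[D,E](g) = D(E(g)) - E(D(g)) = [a_{E(g)}, E(g)] - E([a_g,g])$ and expand the second term using the derivation property of $E$ as $E([a_g,g]) = [E(a_g), g] + [a_g, E(g)]$. This gives $[D,E](g) = [a_{E(g)} - a_g, E(g)] - [E(a_g), g]$, which is a sum of two inner-looking pieces but with different "moving" elements $E(g)$ and $g$, so it is not obviously of the form $[x,g]$. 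Closing this gap is exactly the main obstacle, and it is why the general Question remains open; the proof must exploit nilpotency.

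The key idea I would pursue is to use the \emph{ascending central series} of $\g$, say $0 = Z_0 \subset Z_1 \subset \cdots \subset Z_c = \g$ with $Z_{i+1}/Z_i = Z(\g/Z_i)$, and argue by induction on $i$ that for $g \in Z_i$ one can choose $x$ with $[D,E](g) = [x,g]$, strengthening the inductive hypothesis so that it also controls the behavior of $D$ and $[D,E]$ modulo the lower terms $Z_{i-1}$. The base case $g \in Z_1 = Z(\g)$ is immediate since then $[D,E](g) = 0 = [0,g]$ (indeed $D(Z(\g)) \subseteq$ something one can pin down, and $E$ preserves the center). For the inductive step, given $g \in Z_{i+1}$, the element $[g, \cdot]$ maps $\g$ into $Z_i$, and for any $h$ the commutators $[a_g, g]$, $[E(a_g),g]$, $[a_{E(g)},E(g)]$ can be analyzed using that various correction terms land in $Z_i$ where the inductive hypothesis applies. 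One then wants to show that the discrepancy $[a_{E(g)} - a_g, E(g)] - [E(a_g),g] - [x,g]$ vanishes for a suitable $x$, using that $E(g) \equiv g' \pmod{Z_i}$ relates $E(g)$-brackets to $g$-brackets up to terms handled inductively. A cleaner route may be to first establish the auxiliary fact that $\AID(\g)$ contains all derivations that are inner "modulo the center", and to use the fact (provable by a dimension/flag argument over $\C$) that an almost inner derivation can be realized by a \emph{single} $a$ on a well-chosen subspace, pushing the common-$a$ obstruction into successively smaller central quotients.

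The step I expect to be genuinely hard is reconciling the two distinct moving elements $g$ and $E(g)$: one must show that $[a_{E(g)},E(g)]$ can be rewritten, modulo inner derivations evaluated at $g$ and modulo $Z_{i-1}$, in terms of brackets with $g$. This is where I would spend the bulk of the effort, likely by choosing the $a_h$ not arbitrarily but coherently — e.g. showing that on each $Z_{i+1}$ one may select a single linear (or at least affine) assignment $h \mapsto a_h$ compatible with the module structure — which again leans essentially on $k = \C$ (or at least algebraically closed, infinite) so that the relevant varieties of "good" choices are nonempty and behave well under specialization. Once $\AID(\g) \trianglelefteq \Der(\g)$ is established, the statement that $\Sha(\g) = \AID(\g)/\ad(\g)$ is an ideal of $\Out(\g) = \Der(\g)/\ad(\g)$ is immediate from the correspondence theorem for ideals.
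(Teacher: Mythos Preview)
Your proposal is not a proof but a plan, and you correctly identify the obstruction: rewriting $[a_{E(g)}-a_g,E(g)]$ as a bracket with $g$ is exactly the missing piece, and nothing in your inductive sketch actually closes that gap. The strengthened inductive hypothesis is never stated precisely, the ``coherent choice'' of $h\mapsto a_h$ is left as a hope, and the claim that $E$ preserves the ascending central series (true) does not by itself let you trade brackets with $E(g)$ for brackets with $g$ modulo lower terms. As written, the argument stalls at precisely the point where the general question is open, and the appeal to $\C$ is not doing any visible work.

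The paper's proof takes a completely different route and bypasses this computation entirely. Since $\g$ is nilpotent over $\C$, it is algebraic: there is a (connected, nilpotent) algebraic group $G$ with $\Lie(G)=\g$. Then $\Aut(G)$ is an algebraic group with $\Lie(\Aut(G))\cong\Der(\g)$, and one checks at the \emph{group} level that $\AIAut(G)$ is a normal subgroup of $\Aut(G)$; this is a one-line conjugation computation (for $\sigma\in\AIAut(G)$, $\varphi\in\Aut(G)$, write $\sigma(\varphi(g))=a\varphi(g)a^{-1}$ and apply $\varphi^{-1}$). One then argues, following Gordon--Wilson, that $\AIAut(G)$ is closed in $\Aut(G)$ and that $\Lie(\AIAut(G))\cong\AID(\g)$. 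Since the Lie algebra of a closed normal subgroup is an ideal, $\AID(\g)\trianglelefteq\Der(\g)$ follows. In short: the hard Lie-algebra identity you are trying to prove directly is replaced by an easy group-theoretic normality statement, transported through the Lie functor; nilpotency (and $k=\C$) enters only to guarantee that $\g$ is the Lie algebra of an algebraic group and that the Gordon--Wilson closedness argument applies.
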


\begin{proof}
{\it Step 1.}
Note that since $\g$ is nilpotent, it is algebraic, i.e. there exists
an affine algebraic $k$-group $G$ such that $\g=\Lie (G)$.

{\it Step 2.}
Denote $N:=\Aut (G)$, the group of automorphisms of $G$. It also has a structure of
an algebraic group, and we have an isomorphism of Lie algebras $\Lie (N)\cong \Der(\g)$,
see, e.g. \cite[Section~I.2.10]{OV} (the material of this section refers to Lie groups
but since $k=\mathbb C$, the same holds for algebraic groups).

{\it Step 3.} Denote $H:=\AIAut(G)$, the group of almost inner automorphisms of $G$,
see Section \ref{sec:intro}. Let us prove that $H$ is a closed normal subgroup of $N$.

We thank Pradeep Kumar Rai
for communicating us the following  fact. 

\begin{lemma} \label{lem:norm}
Let $G$ be any group. Then $\AIAut(G)$ is a normal subgroup of $\Aut(G)$.
\end{lemma}

\begin{proof}
Let $\sigma\in\AIAut(G)$, $\varphi\in\Aut (G)$, $g\in G$. By the definition of
an almost inner automorphism, we have $\sigma(\varphi (g))=a\varphi (g)a^{-1}$
for some $a\in G$. Hence
$$
(\varphi^{-1}\sigma\varphi)(g) = \varphi^{-1}(a\varphi (g)a^{-1})
= \varphi^{-1}(a) g \varphi^{-1}(a^{-1}) = \varphi^{-1}(a) g (\varphi^{-1}(a))^{-1}.
$$
Thus $\varphi^{-1}\sigma\varphi\in\AIAut (G)$.
\end{proof}

To prove that $H$ is closed in $N$, we use the same argument as in \cite{GW}. Again,
as at Step 2, we only have to rephrase it, replacing Lie groups with algebraic groups.

{\it Step 4.} By Step 3, $H$ is an affine algebraic group. Then $\Lie(H)$ is an
ideal of $\Der(L)$, see, e.g. \cite[10.2,~Cor.~A]{Hu}.

{\it Step 5.} As in \cite{GW}, we have an isomorphism $\Lie (H)\cong \AID(\g)$.
By Step 4, this finishes the proof.

\end{proof}

\begin{remark} \label{rem:Milne}
It is unclear whether one can extend the class of algebras for which the statement
of Theorem \ref{th:ideal-lie} holds. Already Step 1 of our proof breaks down for
solvable algebras because some of them are not algebraic, see
\cite[\S 5, Ex.~6 on p.~126]{Bou}, \cite[1.25, 3.42]{Mi}.
\end{remark}

\begin{example}
Consider the 5-dimensional solvable Lie algebra $\g$ mentioned in Remark \ref{rem:Milne}.
It is defined by the following multiplication table of basis elements:
$[e_1,e_2]=e_5,
[e_1,e_3]=e_3,
[e_2,e_4]=e_4$
(all other products are equal to $0$),
so $e_5$ is a central element.

We have $\g^{(n)}:=[\g,g^{(n-1)}]=\Span(e_3,e_4)$ for all $n\geq 2$.
On the other hand, $\g^{\prime\prime}:=[[\g,\g],[\g,\g]]=0$. Therefore, $\g$
is solvable but not nilpotent.

Let $\ph$ be a derivation of $\g$ with matrix $C=(\ph_{ji})_1^5$. A straightforward
computation of the $\ph (e_i)$ using Leibniz rule implies that all entries of $C$ are
zero except $\ph_{31}, \ph_{33}, \ph_{42}, \ph_{44}, \ph_{51}, \ph_{52}$, with no other
relations. Hence
\[
\Der(\g)=\Span(E_{31},E_{33},E_{42},E_{44},E_{51},E_{52}),
\]
where the $E_{ji}$ denote the matrix units.

Further, suppose that $\ph$ is an almost inner derivation so that the linear equation in $y\in \g$
\begin{equation} \label{eq:almost}
[x,y]=\ph(x)
\end{equation}
has a solution for every $x\in\g$. Representing $x$ and $y$ as vectors in $k^n$ in the
basis $\{e_i\}$ and using the conditions $\ph (e_i)\in [e_i,\g]$ and the multiplication table,
we present \eqref{eq:almost} as a system of linear equations in the coordinates of $y$.
We then use the condition that this system must be solvable for any choice of the
coordinates of $x$ and arrive at the following conclusion: the entries of $C$ must satisfy
the additional relations $\ph_{52}=\ph_{33}$, $\ph_{51}=-\ph_{44}$, and these relations are sufficient to
guarantee that $\ph$ is an almost inner derivation. Thus
\begin{equation} \label{eq:aid}
\AID(\g)=\Span(E_{31},E_{33}+E_{52}, E_{42}, E_{44}- E_{51}).
\end{equation}

Finally, suppose further that $\ph$ is an inner derivation. We then compute $\ph(e_i)$ using the
multiplication table and arrive at the same result as in \eqref{eq:aid}, i.e.
$\ad(\g)=\Span(E_{31},E_{33}+E_{52}, E_{42}, E_{44}- E_{51}).$ We conclude that $\Sha(\g)=\AID(\g)/\ad(\g)=0$.

Thus for the algebra we considered, $\Sha (\g)$ is an ideal of $\Out (\g)$ for trivial reasons.
It remains a tempting problem to find an example where Question \ref{q:ideal-lie} is answered
in the negative.
\end{example}

\begin{remark} \label{nonzero}
Note that there are numerous examples of Lie algebras $\g$ with $\Sha (\g)\ne 0$, see
\cite{GW}, \cite{SMSB}, \cite{BDV1}--\cite{BDV3}. To give the reader some flavour, we
reproduce here some of such examples. We use the notation of Magnin's tables \cite{Ma}
for algebras of small dimension.

Let $\g =\g_{5,3}$ be given by a basis $e_1,\dots ,e_5$ with multiplication table
$$
[e_1,e_2] = e_4, \, [e_1,e_4] = e_5, [e_2,e_3] = e_5
$$
(all other entries equal 0). This is a 3-step nilpotent algebra with $\AID (\g)=\ad (\g) \oplus
\left<E_{5,3}\right>$ where $E_{5,3}$ maps $e_3$ to $e_5$ and all other $e_i$ to 0. It is shown in
\cite[Example 2.7]{BDV1} that the derivation $E_{5,3}$ is not inner, so that $\Sha (\g)$ is
an abelian Lie algebra of dimension 1.

We are grateful to the referee for pointing out other interesting examples of algebras $\g$ with
$\Sha (\g)\ne 0$. These are $\g =\g_{6,20}$, the nilradical of the
standard Borel subalgebra of the exceptional simple Lie algebra $\g_2$ (see \cite[Remark~8.6]{BDV1})
and the filiform Witt algebra $W_n$ for $n\ge 9$ (see \cite[Proposition~4.5]{BDV2}).

It is also worth noting that such examples cannot be found among algebras of dimension less than 5:
according to \cite[Proposition~2.8]{BDV1}, every almost inner derivation of such an algebra is inner.
\end{remark}

We shall discuss more (known and unknown) properties of $\Sha (\g)$ in Section \ref{sec:par},
in the context of parallelism among groups, Lie algebras, and associative algebras.

\medskip

Meanwhile, for the sake of application in the next section, we consider the algebra
$\Sha (\g, M)$ in the special case
$M=U(\g)$, the universal enveloping algebra of $\g$,
where the module structure is given by the adjoint action of $\g$
continuing the adjoint action of $\g$ on itself. (There are other actions
that we do not consider here.) Recall that the Poincar\'e--Birkhoff--Witt
(PBW) theorem provides the canonical $k$-linear injective map $i\colon \g\to U(\g )$.
We will identify $\g$ with its image $i(\g )$ without special mentioning.

We start with the following simple (and perhaps well-known) lemma.

\begin{lemma} \label{lem:adj}
Let $m\in U(\g )$. If for all $g\in \g$ we have $g\circ m\in \g$, then $m\in \g$.
\end{lemma}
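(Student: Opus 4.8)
The plan is to use the PBW basis to pin down $m$ coordinate by coordinate via the adjoint action. Fix an ordered basis $e_1,\dots,e_n$ of $\g$ and write $m\in U(\g)$ uniquely in the PBW basis of ordered monomials $e^{\alpha}=e_1^{\alpha_1}\cdots e_n^{\alpha_n}$, $\alpha\in\N^n$. Let $d$ be the top degree appearing with nonzero coefficient in $m$, and suppose for contradiction that $d\ge 2$ (if $d\le 1$ then $m\in k\cdot 1 \oplus \g$, and applying $g\circ m\in\g$ for $g=0$, or just inspecting the degree-$0$ part, forces the scalar term to vanish, giving $m\in\g$). The key point is that the adjoint action $g\circ(-)=[g,-]$ on $U(\g)$ does not raise the PBW degree: $[\g,U_{\le d}(\g)]\subseteq U_{\le d}(\g)$, and moreover its effect on the top graded piece $U_{\le d}/U_{\le d-1}\cong S^d(\g)$ is exactly the natural action of $\g$ on the symmetric power, which is the unique derivation extending $\ad$. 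So the degree-$d$ part of $g\circ m$ is the image of $g$ acting on the degree-$d$ part $\overline m\in S^d(\g)$ of $m$ via this symmetric-power action.

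First I would record this structural fact (the associated graded of $U(\g)$ with its adjoint action is $S(\g)$ with the induced action, and this action is by derivations). Then, since $g\circ m\in\g\subseteq U_{\le 1}(\g)$ for every $g$, comparing degree-$d$ components (with $d\ge 2$) shows that $g\cdot\overline m=0$ in $S^d(\g)$ for all $g\in\g$, i.e.\ $\overline m$ is a $\g$-invariant element of $S^d(\g)$. The final step is to show this forces $\overline m=0$, contradicting the choice of $d$. For this I would invoke that $\g$ is nilpotent: a nonzero nilpotent Lie algebra acting on a nonzero finite-dimensional module always has a nonzero invariant vector is the wrong direction — what I actually need is that the action of $\g$ on $S^d(\g)$ for $d\ge 1$ has \emph{no} nonzero invariants, equivalently $S^d(\g)^{\g}=0$. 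This holds because, by Engel's theorem, $\ad\g$ acts on $\g$ by nilpotent operators with a complete flag $0=V_0\subsetneq V_1\subsetneq\cdots\subsetneq V_n=\g$ of ideals with $[\g,V_i]\subseteq V_{i-1}$; then any $\g$-invariant $v\in S^d(\g)$ would, looking at the induced filtration on $S^d(\g)$, have to lie in the bottom piece, and a short induction on $d$ using $[\g,V_1]=0$, $V_1=\Span(e_n)$ say, shows the only invariants in $S^d(\g)$ are the obvious ones in $S^d(V_1)=k\cdot e_n^d$ — but $e_n$ is central only in $\g$, not obviously fixed under the $S^d$ action unless $[\g,e_n]=0$, which is exactly what $V_1$ being a trivial submodule means, so $k\cdot e_n^d\subseteq S^d(\g)^\g$; to kill even this I need $d\ge 2$ together with the fact that $g\circ m\in\g$ rules out any degree-$d$ piece at all, so actually I should argue directly that the \emph{whole} degree-$d$ component must be zero because $\g\cap U_{\ge 2}$ has trivial image in each $S^d$, $d\ge 2$.

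Let me restructure the endgame more cleanly: once we know the degree-$d$ part $\overline m\in S^d(\g)$ is $\g$-invariant with $d\ge 2$, I argue that $S^d(\g)^{\g}=0$ for a nonzero nilpotent $\g$ and $d\ge 1$. Since $\g$ is nilpotent, $\ad:\g\to\mathfrak{gl}(\g)$ is a nilpotent Lie algebra of nilpotent endomorphisms, and the induced action on $S^d(\g)$ is again by a nilpotent Lie algebra of endomorphisms, but \emph{these need not be nilpotent operators} — e.g.\ a central $e_n$ acts trivially, so $e_n^d$ is invariant. Hence $S^d(\g)^\g\ne 0$ in general, and the naive contradiction fails. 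The correct fix is to use the hypothesis more fully: $g\circ m\in\g$ for \emph{all} $g$ means not just that $\overline m$ is invariant but that $g\circ m$ has \emph{no} component in degrees $2,\dots,d$ for every $g$; combined with a descending induction (strip off the top piece once it is shown to be a constant times $e_n^d$-type invariant, then note $m-(\text{that invariant})$ again satisfies the hypothesis and has lower degree), together with the single extra observation that an invariant monomial like $e_n^d$ with $d\ge 2$ cannot itself be ``almost central'' in the required sense because then $m':=m$ would have $g\circ m'=0$ identically, i.e.\ $m'$ lies in the center of $U(\g)$, and PBW plus $d\ge 2$ forces a contradiction only if we know the center of $U(\g)$ meets $U_{\le d}$ trivially above degree $1$ — which is false ($U(\g)$ can have nontrivial center).

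So the honest main obstacle is precisely this: it is \emph{not} true for a general nilpotent $\g$ that $m$ central in $U(\g)$ and $\deg m\ge 2$ is impossible, so the lemma must genuinely use ``$g\circ m\in\g$'' rather than ``$g\circ m=0$''. I expect the real proof leverages that $\g\subseteq U(\g)$ is exactly $U_{\le 1}$, so if $g\circ m\in U_{\le 1}$ for all $g$ while $\deg m = d\ge 2$, then passing to $\mathrm{gr}\,U(\g)=S(\g)$ we get $g\cdot\overline m=0$ in $S(\g)$ for all $g$, and now I use that $S(\g)^\g$, for $\g$ nilpotent nonzero, while possibly nonzero, consists only of elements all of whose monomials involve central (indeed $[\g,\g]$-annihilated) generators — and then I lift: replace $m$ by $m$ minus a suitable element of $U(\g)$ whose symbol is $\overline m$ and which is \emph{itself} central (possible because invariants of $S(\g)$ lift to the center of $U(\g)$ via symmetrization — the Duflo/PBW symmetrization is a $\g$-module map), reducing $\deg m$ and recursing. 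After finitely many steps $m$ lands in $U_{\le 1}=k\oplus\g$, and then the scalar part is killed by inspecting $g\circ m\in\g$ at $g=0$ being trivially satisfied — hmm, that does not kill the scalar; instead note $m\in\g$ is what we want and the constant term is allowed to be there, contradicting nothing... so in fact the clean statement should perhaps be $m\in k\oplus\g$, but the lemma claims $m\in\g$, so the symmetrization must also arrange the recursion lands in $\g$ with no constant. I would therefore present the argument as: reduce via $\mathrm{gr}$ to showing $S^{\ge 2}(\g)^\g$ lifts to central elements of $U(\g)$, subtract, recurse to degree $\le 1$, and finally handle the degree $\le 1$ case by hand (the constant term: if $m = c + x$ with $c\in k$, $x\in\g$, then $g\circ m = [g,x]\in\g$ automatically, so this gives no constraint — hence the lemma as literally stated needs the convention that $k\subseteq U(\g)$ is \emph{not} counted, i.e.\ one uses that $m$ with $g\circ m\in\g$ for all $g$ lies in $\g$ only after killing constants, and I suspect the paper's proof simply notes the constant term contributes nothing to any bracket and is therefore, by a separate normalization or by the context of the application, zero). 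Given the uncertainty, the core of my proposed proof is the first paragraph — the $\mathrm{gr}$/PBW reduction — and I would flag the lifting-of-invariants-to-the-center step (symmetrization is a $\g$-equivariant section of $\mathrm{gr}$) as the technical heart.
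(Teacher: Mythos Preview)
Your instinct to pass to the associated graded is on the right track, but the paper's approach is much cleaner: rather than working with the PBW \emph{filtration} and trying to peel off one top degree at a time, it uses the canonical symmetrization to obtain a $\g$-module \emph{direct sum decomposition} $U(\g)=\bigoplus_{i\ge 0}U^i$ with $U^1\cong\g$. Writing $m=\sum_i m_i$ with $m_i\in U^i$, each $g\circ m_i$ stays in $U^i$, so $g\circ m\in\g=U^1$ forces $g\circ m_i=0$ for all $i\ne 1$ in one stroke. No induction, no lifting of invariants, no descending recursion is needed.

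That said, the worry you kept circling back to is entirely justified, and it is a genuine gap that the paper's ``this immediately implies'' glosses over. The argument above shows only that each $m_i$ with $i\ne 1$ is $\g$-invariant, not that it vanishes; and indeed the lemma as literally stated is false: $m=1\in U^0$, or any central element of $U(\g)$ such as a Casimir, satisfies $g\circ m=0\in\g$ for all $g$ yet does not lie in $\g$. What the decomposition actually proves is the weaker (but sufficient for the application to Proposition~\ref{prop:emb}) statement that $g\circ m=g\circ m_1$ for all $g$, with $m_1\in\g$; equivalently, $m\in\g + Z(U(\g))$. So your diagnosis of the obstruction was correct, your proposed fix via lifting invariants and subtracting central elements is unnecessary once one has the direct sum decomposition, and the clean resolution is simply to weaken the conclusion of the lemma.
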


\begin{proof}
Recall that $U(\g )$ has a natural filtration
$$
U_0\subset U_1\subset\dots\subset U_m\subset\dots
$$
where $U_i$ is spanned by the monomials of length at most $i$.

The associated grading gives, by canonical symmetrization, a decomposition
\begin{equation} \label{sum}
U(\g )=\bigoplus_{m\ge 0}U^m
\end{equation}
where $U^0=U_0=k$, and $U^m=U_m/U_{m-1}$ ($m\ge 1)$ is the set of symmetric homogeneous
elements of degree $m$ (in particular, $U^1(\g )$ is isomorphic to $\g$, and each direct summand
is $\g$-invariant. See, e.g. \cite[2.4.6, 2.4.10]{Dix} for details.

This immediately implies the assertion of the lemma.
\end{proof}

We will need the following proposition where some of the statements are well
known.

\begin{prop} \label{prop:emb}
The map $i$ induces $k$-linear injective maps
\begin{itemize}
\item[(i)] $\Der(\g)\to \Der(\g,U(\g))$;
\item[(ii)] $\ad(\g)\to \ad(\g,U(\g))$;
\item[(iii)] $\Out(\g)\to \Out(\g,U(\g))$;
\item[(iv)] $\AID(\g)\to \AID(\g,U(\g))$;
\item[(i)] $\Sha(\g)\to \Sha(\g,U(\g))$.
\end{itemize}
\end{prop}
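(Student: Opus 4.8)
The plan is to prove the five injectivity statements essentially in the order listed, since each builds on the previous one. For (i), the map sends a derivation $D\colon\g\to\g$ to $i\circ D\colon\g\to U(\g)$; this is a derivation with values in $U(\g)$ because $i$ is a Lie algebra map into $U(\g)$ viewed with its commutator bracket, and the adjoint action of $\g$ on $U(\g)$ restricts on $i(\g)$ to the original adjoint action, so the Leibniz rule \eqref{der-module-lie} is preserved. Injectivity is immediate from injectivity of $i$. For (ii), the inner derivation $D_m$ with $m\in\g$ is sent to $D_{i(m)}$, which is exactly the inner derivation of $\g$ in $U(\g)$ attached to $i(m)$; again injectivity is clear, and the image does land in $\ad(\g,U(\g))$ by construction. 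Since (ii) identifies $\ad(\g)$ with a subspace of $\ad(\g,U(\g))$ compatibly with the inclusion $\ad(\g)\subset\Der(\g)$ and $\ad(\g,U(\g))\subset\Der(\g,U(\g))$, statement (iii) follows by passing to quotients: the induced map $\Out(\g)\to\Out(\g,U(\g))$ is well defined, and it is injective because if $i\circ D=D_{i(m)}$ for some $m\in U(\g)$, then for every $g$ we have $i(D(g))=g\circ i(m)\in i(\g)$, hence $g\circ m\in\g$ for all $g$, so by Lemma \ref{lem:adj} we get $m\in\g$ and therefore $D=D_m\in\ad(\g)$.

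For (iv), if $D\in\AID(\g)$, then for each $g\in\g$ there is $m_g\in\g$ with $D(g)=g\circ m_g$; applying $i$ gives $(i\circ D)(g)=g\circ i(m_g)$ with $i(m_g)\in i(\g)\subset U(\g)$, so $i\circ D\in\AID(\g,U(\g))$, and injectivity is inherited from (i). Finally, (v) follows formally: by (iv) and (ii) the map $\AID(\g)\to\AID(\g,U(\g))$ carries $\ad(\g)$ into $\ad(\g,U(\g))$, so it descends to $\Sha(\g)=\AID(\g)/\ad(\g)\to\AID(\g,U(\g))/\ad(\g,U(\g))=\Sha(\g,U(\g))$; injectivity is the same argument as for (iii), again invoking Lemma \ref{lem:adj}: a class in $\Sha(\g)$ mapping to zero is represented by $D$ with $i\circ D=D_{i(m)}$ for some $m\in U(\g)$, forcing $g\circ m\in\g$ for all $g$, hence $m\in\g$ by the lemma, hence $D\in\ad(\g)$.

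The only genuine content beyond bookkeeping is the injectivity on the quotient-type objects (iii) and (v), and the single nontrivial input there is Lemma \ref{lem:adj}, which has already been proved; everything else is a direct check that $i$ intertwines the two adjoint actions on $\g$ and that it respects the defining conditions of derivations, inner derivations, and almost inner derivations. I expect the main (mild) obstacle to be purely notational: being careful that ``$g\circ m$'' on the $U(\g)$ side means the adjoint/commutator action $[i(g),m]=i(g)m-mi(g)$ in $U(\g)$, and verifying that this agrees with $i([g,h])$ when $m=i(h)$, which is just the defining relation of the universal enveloping algebra. Once that compatibility is recorded, the five statements are corollaries of the injectivity of $i$ together with Lemma \ref{lem:adj}.
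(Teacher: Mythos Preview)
Your proof is correct and follows exactly the paper's approach: (i), (ii), (iv) are immediate from the injectivity of $i$ and the compatibility of the adjoint actions, while (iii) and (v) reduce to Lemma~\ref{lem:adj}. The only (harmless) slip is notational: in the injectivity arguments for (iii) and (v) you write $D_{i(m)}$ with $m\in U(\g)$, but since $m$ already lives in $U(\g)$ you should simply write $D_m$.
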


\begin{proof}

Assertions (i), (ii) and (iv) are obvious, (iii) and (v)
are immediate consequences of Lemma \ref{lem:adj}.

\end{proof}

\begin{cor} \label{nonzero-Sha-U}
There exist finite-dimensional Lie algebras $\g$ with nonzero $\Sha (\g ,U(\g ))$.
\end{cor}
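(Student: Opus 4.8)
The plan is to combine Proposition \ref{prop:emb}(v) with the existence of Lie algebras having nonzero $\Sha(\g)$. By Proposition \ref{prop:emb}(v), the map $i$ induces an injection $\Sha(\g)\hookrightarrow \Sha(\g,U(\g))$, so it suffices to exhibit a single finite-dimensional Lie algebra $\g$ with $\Sha(\g)\ne 0$. Such an algebra is provided in Remark \ref{nonzero}: for instance $\g=\g_{5,3}$ in Magnin's notation, which is $3$-step nilpotent and has $\Sha(\g)$ a one-dimensional abelian Lie algebra, with the non-inner almost inner derivation $E_{5,3}$ as a representative of the nonzero class. Since $\Sha(\g)\ne 0$ and $\Sha(\g)$ embeds into $\Sha(\g,U(\g))$, we get $\Sha(\g,U(\g))\ne 0$, which is the assertion.

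Concretely, I would write: take $\g=\g_{5,3}$; then $\Sha(\g)\ne 0$ by Remark \ref{nonzero} (equivalently by \cite[Example~2.7]{BDV1}), and by Proposition \ref{prop:emb}(v) the induced map $\Sha(\g)\to\Sha(\g,U(\g))$ is injective, whence $\Sha(\g,U(\g))\supseteq i(\Sha(\g))\ne 0$. One could, if desired, make the statement completely explicit by noting that the image of $E_{5,3}$ in $\Sha(\g,U(\g))$ is represented by the derivation $\g\to U(\g)$ sending $e_3\mapsto e_5$ and all other $e_i\mapsto 0$, and checking via Lemma \ref{lem:adj} that this class is nonzero in $\Out(\g,U(\g))$ — but this is exactly the content already packaged in Proposition \ref{prop:emb}, so no further computation is needed.

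There is essentially no obstacle here: the corollary is a direct consequence of the preceding proposition together with the known nonvanishing examples, and the only "work" is to invoke the right reference for an algebra with nonzero $\Sha(\g)$. The one point worth a moment's care is to make sure the chosen example is genuinely finite-dimensional (it is, being $5$-dimensional) and that the cited nonvanishing of $\Sha(\g)$ is for the adjoint action — which it is, matching the module $M=\g$ underlying the embedding $\g\hookrightarrow U(\g)$ that carries the adjoint action forward.
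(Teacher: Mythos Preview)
Your proof is correct and follows essentially the same approach as the paper: invoke Proposition~\ref{prop:emb}(v) to embed $\Sha(\g)$ into $\Sha(\g,U(\g))$, and then cite Remark~\ref{nonzero} for a finite-dimensional $\g$ with $\Sha(\g)\ne 0$. The paper's own argument is in fact slightly terser (it does not single out $\g_{5,3}$ explicitly), but the logic is identical.
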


\begin{proof}
By Proposition \ref{prop:emb}(v), for any Lie algebra with $\Sha (\g)\ne 0$ we have
$\Sha (\g, U(\g ))\ne 0$. There are many examples of such algebras, see Remark
\ref{nonzero} above.
\end{proof}

\section{Associative algebras} \label{sec:ass}

In this section, $k$ is a field, $A$ is an associative unital $k$-algebra,
and $M$ is an $A$-bimodule. We do not use any special symbols
for denoting multiplication in $A$ and left and right actions
of $A$ on $M$ with the hope that this does not lead to any confusion.

In this case we have two versions of $\Sha (A)$, additive and
multiplicative.

\subsection{Additive $\Sha (A)$} \label{subsec:add}

Recall that a derivation $D\colon A\to M$ is a $k$-linear map
such that
$$
D(ab)=D(a)b+aD(b)
$$
for all $a,b\in A$. For a given $m\in M$, the map
$$
D_m\colon A\to M, \quad m\mapsto am-ma,
$$
is a derivation. Such derivations are called inner. We denote by
$\Der (A,M)$ the set of all derivations and by $\ad (A,M)$ the
set of all inner derivations. Clearly, they are both vector $k$-spaces,
and $\ad (A,M)$ is a $k$-subspace of $\Der (A,M)$. Let $\Out (A,M)=\Der (A,M)/\ad (A,M)$
denote the quotient space. It is well known that $\Out (A,M)$ is the first
Hochschild cohomology $HH^1(A,M)$.

In the special case
$M=A$ we abbreviate the notation $\Der (A,A)$, $\ad (A,A)$ and $\Out (A,A)$ to
$\Der (A)$, $\ad (A)$ and $\Out (A)$, respectively. The first two spaces acquire a natural Lie algebra
structure defined by the Lie bracket $[D,D']=DD'-D'D$, $\ad (A)$ is a Lie ideal
of $\Der (A)$, hence $\Out (A)$ also carries a Lie algebra structure. This Lie algebra is
the first Hochschild cohomology $HH^1(A)$.

\begin{definition} \label{def:lid-ass}
Set
$$
\AID (A,M):= \{D\in \Der (A,M) \quad | \quad (\forall a\in A) \quad  (\exists m\in M) \quad D(a)=am-ma\}.
$$
(Here $m$ may depend on $a$.) We call elements of $\AID (A,M)$ almost inner derivations
of $A$ with coefficients in $M$.
\end{definition}

Clearly, $\AID (A,M)$ is a subspace of $\Der (A,M)$, $\ad (A,M)$ is a subspace of $\AID (A,M)$,
and we define
$$
\Sha_{\text{\rm{a}}} (A,M):=\AID (A,M)/\ad (A,M).
$$
It is a subspace of $\Out (A,M)$.

As in Section \ref{sec:lie}, in the particular case $M=A$ we shorten $\AID (A,M)$ and $\Sha_{\text{\rm{a}}} (A,M)$ to
$\AID (A)$ and $\Sha_{\text{\rm{a}}} (A)$, respectively. As above, $\AID(A)$ inherits the Lie algebra structure from
$\Der (A)$.

Indeed, the same argument as in \cite[Proof of Proposition 2.3]{BDV1} works here as well.

\begin{prop}
For any $D,D'\in \AID (A)$ we have $[D,D']\in \AID (A)$.
\end{prop}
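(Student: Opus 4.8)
The plan is to mimic, in the associative setting, the computation that establishes closure of $\AID(\g)$ under the Lie bracket in \cite[Proof of Proposition 2.3]{BDV1}. Fix $D,D'\in\AID(A)$. For each $a\in A$ there are elements $m_a,m_a'\in A$ (depending on $a$) with $D(a)=am_a-m_aa=[a,m_a]$ and $D'(a)=[a,m_a']$, writing $[x,y]:=xy-yx$ for the commutator in $A$. I want to produce, for each $a$, an element $n_a\in A$ with $[D,D'](a)=(DD'-D'D)(a)=[a,n_a]$.

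First I would expand $DD'(a)=D([a,m_a'])=D(a)m_a'+aD(m_a')-D(m_a')a-m_a'D(a)=[D(a),m_a']+[a,D(m_a')]$, using that $D$ is a derivation of $A$; symmetrically $D'D(a)=[D'(a),m_a]+[a,D'(m_a)]$. Subtracting, the two ``$[a,\,\cdot\,]$'' terms already have the desired shape, so the obstruction is the difference $[D(a),m_a']-[D'(a),m_a]=[[a,m_a],m_a']-[[a,m_a'],m_a]$. By the Jacobi identity for the commutator bracket on the associative algebra $A$ (valid since $(A,[\,\cdot\,,\,\cdot\,])$ is a Lie algebra), $[[a,m_a],m_a']-[[a,m_a'],m_a]=[a,[m_a,m_a']]$, which is again of the form $[a,\,\cdot\,]$. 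Hence $[D,D'](a)=[a,\,D(m_a')-D'(m_a)+[m_a,m_a']\,]$, so we may take $n_a:=D(m_a')-D'(m_a)+[m_a,m_a']\in A$, and $[D,D']\in\AID(A)$.

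I expect no serious obstacle: the only subtlety is purely bookkeeping, namely that $m_a,m_a'$ depend on $a$, so one must be careful to apply the derivation and Jacobi identities for a single fixed $a$ and never to differentiate the assignment $a\mapsto m_a$ (which need not even be linear). Everything else is the same Leibniz-plus-Jacobi manipulation as in the Lie case, with the adjoint action $g\mapsto [g,\cdot]$ of $\g$ replaced by the inner-derivation action $a\mapsto[a,\cdot]$ of $A$ on itself; one could, if desired, phrase the whole argument by passing to the associated Lie algebra $A^{(-)}$ and invoking \cite[Proof of Proposition 2.3]{BDV1} verbatim. A final remark I would add is that the same computation, with $m_a,m_a'\in M$ and the action being the bimodule action $m\mapsto am-ma$, shows more generally that $[D,D']$ lands in $\AID(A,M)$ whenever $M$ carries a compatible algebra structure making it a Lie module over $A^{(-)}$; but for the stated proposition $M=A$ suffices.
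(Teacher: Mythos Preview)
Your proposal is correct and follows essentially the same approach as the paper: both fix $a$, expand $(DD'-D'D)(a)$ using the Leibniz rule, and exhibit the explicit element $n=[m,m']+D(m')-D'(m)$ with $[D,D'](a)=an-na$. The only difference is cosmetic---you package the calculation using commutator notation and invoke the Jacobi identity, whereas the paper writes out the associative expansion and collects terms directly---but the resulting $n$ and the underlying computation are identical.
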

\begin{proof}
Let $a\in A$.
We have $D(a)=am-ma$, $D'(a)=am'-m'a$ for some $m,m'\in A$ depending on $a$, so that
{\footnotesize{
$$
\begin{aligned}
& [D,D'](a)  =(DD'-D'D)(a)=D(D'(a))-D'(D(a))=D(am'-m'a)-D'(am-ma)\\
          & = D(a)m'+aD(m')-D(m')a-m'D(a)-D'(a)m-aD'(m)+D'(m)a+mD'(a)\\
          & =(am-ma)m'+aD(m')-D(m')a-m'(am-ma)-(am'-m'a)m-aD'(m)+D'(m)a+m(am'-m'a)\\
          & =amm'-mam'+aD(m')-D(m')a-m'am+m'ma-am'm+m'am-aD'(m)+D'(m)a+mam'-mm'a \\
          & =an-na,
\end{aligned}
$$
}}
where $n=mm'-m'm-D'(m)+D(m')$. Hence $[D,D']\in \AID (A)$.
\end{proof}

Clearly,
$\ad (A)$ is a Lie ideal in $\AID (A)$, and hence
$\Sha_{\text{\rm{a}}}(A)$ also carries a natural Lie algebra structure.

\begin{definition} \label{def:sha-add}
We call $\Sha_{\text{\rm{a}}}(A)$ the {\bf {additive Tate--Shafarevich algebra}} of $A$.
\end{definition}

Once a new object is introduced, the first question to ask is whether it can be nontrivial.
It is not hard to construct an associative algebra $A$ with nonzero $\Sha_{\text{\rm{a}}}(A)$.
Here is a `generic' construction suggested by Leonid Makar-Limanov (a similar construction was
communicated to us by Alexei Kanel-Belov; cf. also Example \ref{ex:mult} below).

\begin{example} \label{ex:add}
Take a non-commutative algebra $A$ with an infinite set $S$ of generators and finitary multiplication table,
i.e. such that only a finite number of generators do not commute with any given generator.
Let $m$ denote a formal infinite sum of elements of $A$ such that every generator appears only in a finite
number of summands of $m$. Then the map
$$
D_m\colon A\to A, \quad a\mapsto am-ma,
$$
is well-defined and is a derivation of $A$. Clearly, this derivation is almost inner but not inner,
so that $\Sha_{\text{\rm{a}}}(A)\ne 0.$
\end{example}

Our further goal is to exhibit a {\it finitely generated} algebra $A$ with $\Sha_{\text{\rm{a}}}(A)\ne 0$.
Towards this end, consider $A=U(\g )$ where $\g$ is a Lie algebra, and $U(\g )$ is its universal enveloping algebra.
Any $\g$-bimodule $M$ has a unique structure of a $U(\g)$-bimodule.

\begin{lemma} \label{lem:sha-add-lie}
\begin{itemize}
\item[]
\item[(i)] For any $\g$-bimodule $M$ the vector $k$-spaces
$\Sha_{\text{\rm{a}}}(U(\g), M)$ and $\Sha (\g,M)$ are isomorphic.
\item[(ii)]
The Lie algebras $\Sha_{\text{\rm{a}}}(U(\g))$ and $\Sha (\g,U(\g ))$ are isomorphic.
\end{itemize}\end{lemma}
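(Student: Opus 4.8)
The plan is to obtain the isomorphism from restriction along the PBW embedding $i\colon\g\hookrightarrow U(\g)$ and to isolate the one genuinely hard point. Throughout, a $\g$-bimodule $M$ is regarded as a $\g$-module for the adjoint action $g\circ m:=gm-mg$ (for $M=U(\g)$ this is exactly the action fixed before Lemma~\ref{lem:adj}), and the right-hand objects $\Sha(\g,M)$, $\AID(\g,M)$, $\ad(\g,M)$, \dots\ refer to this structure.

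\emph{Step 1 (restriction is a linear isomorphism on derivations).} Restriction along $i$ is a $k$-linear map $r\colon\Der(U(\g),M)\to\Der(\g,M)$: for $D\in\Der(U(\g),M)$,
\[
D([g,h])=D(gh-hg)=\bigl(gD(h)-D(h)g\bigr)-\bigl(hD(g)-D(g)h\bigr)=g\circ D(h)-h\circ D(g),
\]
so $D|_\g$ is a Lie derivation. The map $r$ is injective because $\g$ generates $U(\g)$ as a $k$-algebra. It is surjective: for $\delta\in\Der(\g,M)$ the assignment $g\mapsto(g,\delta(g))$ is a Lie algebra homomorphism $\g\to U(\g)\ltimes M$ into the square-zero extension (underlying space $U(\g)\oplus M$), the adjoint action being precisely what makes the bracket identity hold; by the universal property of $U(\g)$ it extends to a unique algebra homomorphism splitting the projection $U(\g)\ltimes M\to U(\g)$, whose $M$-component is a derivation $D$ with $r(D)=\delta$. (Alternatively one checks via the PBW filtration that $D(g_1\cdots g_n):=\sum_j g_1\cdots\delta(g_j)\cdots g_n$ is well defined.)

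\emph{Step 2 (inner derivations correspond).} For $m\in M$ the inner derivation $a\mapsto am-ma$ restricts to the inner Lie derivation $g\mapsto gm-mg=g\circ m$, and every element of $\ad(\g,M)$ arises this way; thus $r$ maps $\ad(U(\g),M)$ onto $\ad(\g,M)$, and as $r$ is injective this is an isomorphism. Consequently $r$ descends to an isomorphism $\Out(U(\g),M)\cong\Out(\g,M)$, i.e.\ the classical identification $HH^1(U(\g),M)\cong H^1(\g,M)$.

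\emph{Step 3 (almost inner derivations correspond --- the crux).} If $D\in\AID(U(\g),M)$ then $D(g)\in gM-Mg$ for all $g\in\g$, so $r(D)\in\AID(\g,M)$; the inclusion $r(\AID(U(\g),M))\subseteq\AID(\g,M)$ is thus immediate. The substantive point is the reverse: if $D\in\Der(U(\g),M)$ satisfies $D(g)\in gM-Mg$ for every $g\in\g$, one must show $D(u)\in uM-Mu$ for every $u\in U(\g)$. I would argue by induction on the PBW filtration degree using the canonical symmetrization decomposition $U_n=U_{n-1}\oplus U^{n}$ of Lemma~\ref{lem:adj}: $U^{n}$ is spanned by the powers $g^{n}$ ($g\in\g$), and for such a power the Leibniz rule telescopes,
\[
D(g^{n})=\sum_{j=0}^{n-1}g^{j}D(g)\,g^{\,n-1-j}=g^{n}m_g-m_g g^{n}\in g^{n}M-Mg^{n},
\]
after substituting $D(g)=gm_g-m_g g$. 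The obstacle --- and the step I expect to be by far the most delicate --- is that the property ``$D(u)\in uM-Mu$'' is not additive in $u$, so one must show that the witnesses attached to the various homogeneous components, and to the various powers $g^{n}$ occurring in a given element, can be amalgamated into a single element of $M$; this requires a careful bookkeeping argument, again exploiting the symmetrization and the derivation identity.

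\emph{Step 4 (conclusion and the Lie structure).} Combining Steps~2 and~3, $r$ carries $\bigl(\AID(U(\g),M),\ad(U(\g),M)\bigr)$ isomorphically onto $\bigl(\AID(\g,M),\ad(\g,M)\bigr)$ and hence induces a $k$-linear isomorphism $\Sha_{\text{\rm{a}}}(U(\g),M)\cong\Sha(\g,M)$, which is~(i). For~(ii) take $M=U(\g)$ with its regular bimodule structure, whose associated $\g$-module structure is the adjoint action. Since restriction $\Der(U(\g))\to\Der(\g,U(\g))$ is a bijection, transporting the bracket $[D,D']=DD'-D'D$ along it makes $r$ a Lie algebra isomorphism; by Step~3 this bracket restricts to $\AID(\g,U(\g))$, and by Step~2 it has $\ad(\g,U(\g))$ as an ideal, so $\Sha_{\text{\rm{a}}}(U(\g))\cong\Sha(\g,U(\g))$ as Lie algebras.
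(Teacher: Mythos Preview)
Your approach matches the paper's: use the restriction/extension bijection $\Der(U(\g),M)\cong\Der(\g,M)$, check that inner and almost inner derivations correspond under it, and for~(ii) transport the Lie bracket. The paper's proof is a single short paragraph; it simply asserts that under extension $D\mapsto D'$, ``$\AID(\g,M)$ goes to $\AID(U(\g),M)$'' with no further argument. Your Steps~1,~2 and~4 reproduce, with considerably more detail, exactly what the paper does.

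You are right to flag Step~3 as the crux, and right that your telescoping computation for $g^{n}$ does not settle it: the condition $D(u)\in uM-Mu$ is not closed under addition in $u$, so the fact that the powers $g^{n}$ span $U(\g)$ (via polarization in characteristic zero) does not by itself give the result for arbitrary $u$. This is precisely the obstruction you name. Neither your write-up nor the paper closes this gap --- the paper asserts it in one clause, while you promise a ``careful bookkeeping argument'' without supplying one. In short, your proof plan is faithful to the paper's approach and strictly more transparent about where the difficulty lies, but it shares with the paper the same unproved step.
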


\begin{proof}
First recall that every derivation $D\colon \g\to M$ can be uniquely extended
to a derivation $D'\colon U(\g )\to M$ (see, e.g. \cite[Lemma~2.1.3]{Dix} or
\cite[XIII.2]{CE}). (One has to continue $D$ using the canonical embedding
$\g\to U(\g)$ and then use Leibniz rule.) Under this process, the inner
derivations $\ad (\g, M)$ go to the inner derivations $\ad (U(\g), M)$,
and $\AID (\g, M)$ goes to $\AID (U(\g), M)$. This proves (i). It is easy to see that
in the case $M=U(\g )$ the Lie bracket $[D_1,D_2]$ of derivations of $\g$ goes to
$D'_1D'_2-D'_2D'_1$ where $D'_i$ $(i=1,2)$ are the corresponding derivations of $U(\g)$.
This proves (ii).
\end{proof}

\begin{cor}
There exist finitely generated associative algebras $A$ with $\Sha_{\text{\rm{a}}}(A)\ne 0$.
\end{cor}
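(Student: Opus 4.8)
The plan is to combine the two preceding results in the obvious way. By Lemma~\ref{lem:sha-add-lie}(ii), for any finite-dimensional Lie algebra $\g$ the associative algebra $A=U(\g)$ satisfies $\Sha_{\text{\rm{a}}}(U(\g))\cong \Sha(\g,U(\g))$ as Lie algebras. By Corollary~\ref{nonzero-Sha-U} there exist finite-dimensional Lie algebras $\g$ with $\Sha(\g,U(\g))\ne 0$; indeed, one may take any of the concrete algebras from Remark~\ref{nonzero}, such as $\g_{5,3}$, for which already $\Sha(\g)\ne 0$ and hence, by Proposition~\ref{prop:emb}(v), $\Sha(\g,U(\g))\ne 0$. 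It therefore remains only to observe that $U(\g)$ is finitely generated as an associative algebra: since $\g$ is finite-dimensional, any $k$-basis $e_1,\dots,e_n$ of $\g$ generates $U(\g)$ as a $k$-algebra, by the PBW theorem.

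Putting these together, $A=U(\g_{5,3})$ (or $U(\g)$ for any $\g$ as above) is a finitely generated associative $k$-algebra with $\Sha_{\text{\rm{a}}}(A)\cong\Sha(\g,U(\g))\ne 0$, which is what we wanted.

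There is no real obstacle here: the corollary is a formal consequence of Lemma~\ref{lem:sha-add-lie}, Corollary~\ref{nonzero-Sha-U}, and the finite generation of $U(\g)$ in finite dimension. The one point worth stating explicitly is precisely that last finiteness observation, since the whole purpose of the corollary — as the surrounding text emphasizes — is to upgrade the (easy) infinitely generated Example~\ref{ex:add} to a finitely generated example, and the enveloping algebra of a finite-dimensional Lie algebra is exactly the needed source of such examples.
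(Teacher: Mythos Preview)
Your proof is correct and follows the same route as the paper: take $A=U(\g)$ for a finite-dimensional $\g$ with $\Sha(\g,U(\g))\ne 0$ (supplied by Corollary~\ref{nonzero-Sha-U} and Remark~\ref{nonzero}) and invoke Lemma~\ref{lem:sha-add-lie}. Your explicit observation that $U(\g)$ is finitely generated when $\g$ is finite-dimensional is a useful addition that the paper leaves implicit.
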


\begin{proof}
Let $A=U(\g )$ where $\g$ is a finite-dimensional Lie algebra with nonzero
$\Sha (\g ,U(\g))$. In view of explicit examples mentioned in Remark \ref{nonzero},
such Lie algebras exist, see Corollary \ref{nonzero-Sha-U}.
By Lemma \ref{lem:sha-add-lie}, we have $\Sha_{\text{\rm{a}}}(A)\ne 0$.
\end{proof}

\medskip

The algebra $U(\g)$ is infinite-dimensional, so the next step is to look for
{\it {finite-dimensional}} associative algebras $A$ with $\Sha_{\text{\rm{a}}}(A)\ne 0$.
Somewhat degenerate examples arise from the following observation (see, e.g.
\cite[Proposition~1]{GR}): a Lie algebra $\g$ is associative if and only if it is
two-step nilpotent. As examples of two-step nilpotent Lie algebras $\g$ with
$\Sha(\g)\ne 0$ can be produced in abundance, see \cite{BDV2},
we obtained the needed associative algebras $A$ for free. Note, however, that
the obtained associative algebras are obviously not unital. To repair this,
one can use a standard procedure of adjoining the unit to get a unital algebra
$\widetilde A:=k\oplus A$ for which we have
$\Sha_{\text{\rm{a}}}(\widetilde A)=\Sha_{\text{\rm{a}}}(A)\ne 0$.

It is tempting to use the same examples of finite-dimensional nilpotent Lie algebras $\g$ with nonzero $\Sha (\g)$
to construct `genuine' examples of finite-dimensional associative algebras $A$ with nonzero
$\Sha_{\text{\rm{a}}}(A)$.

Let us first record some obvious properties of derivations in the following lemma the proof of which
is straightforward. Let $A$ be an associative algebra, and let $\g=\Lie (A)$ be its Lie algebra
(the underlying vector $k$-space of $A$ equipped with the bracket $[x,y]=xy-yx$).

\begin{lemma} \label{ass-lie}
\begin{itemize}
\item[]
\item[(i)] $\Der(A)\subseteq\Der (\g)$.
\item[(ii)] $\ad(A) = \ad(\g)$.
\item[(iii)] $\AID (A) =\AID (\g)\cap \Der (A)$.
\item[(iv)] $\Sha_{\text{\rm{a}}}(A)\subseteq\Sha (\g)$.
\end{itemize}
\end{lemma}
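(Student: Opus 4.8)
The plan is to verify each of the four statements directly from the definitions, since they concern the relationship between derivations of an associative algebra $A$ and derivations of its commutator Lie algebra $\g = \Lie(A)$, which share the same underlying vector space.

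For (i), I would start with $D \in \Der(A)$, so $D(ab) = D(a)b + aD(b)$ for all $a,b \in A$. Applying this to both $ab$ and $ba$ and subtracting gives $D(ab - ba) = D(a)b + aD(b) - D(b)a - bD(a) = [D(a),b] + [a,D(b)]$, which is exactly the Leibniz rule \eqref{der-module-lie} for $\g$ with the adjoint action; hence $D \in \Der(\g)$, and the inclusion can be proper because the associative Leibniz rule is strictly stronger than the Lie one. For (ii), the inner derivation $D_m$ of $A$ sends $a \mapsto am - ma = [m,a]$, which is precisely the inner derivation $\ad(m)$ of $\g$; the two notions of inner derivation literally coincide as maps, so $\ad(A) = \ad(\g)$ as subspaces of $\End_k(A)$. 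For (iii), an element $D \in \AID(A)$ is by Definition \ref{def:lid-ass} a derivation of $A$ such that for every $a$ there is $m$ with $D(a) = am - ma = [m,a]$; using (ii) this says exactly that $D \in \Der(A)$ and, viewed as a linear map, $D$ satisfies the almost-inner condition for $\g$, i.e. $D \in \AID(\g)$. Conversely any $D \in \AID(\g) \cap \Der(A)$ clearly lies in $\AID(A)$ by the same translation, giving the equality. Finally (iv) follows formally: by (iii) and (i), $\AID(A) = \AID(\g) \cap \Der(A) \subseteq \AID(\g)$, and by (ii) the subspace $\ad(A) = \ad(\g)$ is the same in both, so passing to quotients yields an inclusion $\Sha_{\text{\rm{a}}}(A) = \AID(A)/\ad(A) \hookrightarrow \AID(\g)/\ad(\g) = \Sha(\g)$.

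None of these steps presents a genuine obstacle; the whole lemma is bookkeeping about which axioms a derivation satisfies and the observation that the bracket $[m,a]$ is defined identically in $A$ and in $\g$. The one point that deserves a word of care is that the inclusions in (i) and (iv) are typically strict, and one might want to remark (though the lemma does not ask for it) that this is why a finite-dimensional associative $A$ with $\Sha_{\text{\rm{a}}}(A) \ne 0$ does not come for free from a Lie algebra with $\Sha(\g) \ne 0$: one needs the almost inner derivation of $\g$ to actually respect the associative product. I would present the proof as a short paragraph handling (i)--(iii) by the elementary translations above and then deriving (iv) as an immediate formal consequence, exactly as the excerpt's phrase "the proof of which is straightforward" anticipates.
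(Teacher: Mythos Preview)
Your proposal is correct and is exactly the kind of direct verification the paper has in mind when it declares the lemma ``straightforward'' and omits the proof; each of your steps (i)--(iv) is the natural unwinding of the definitions. The only cosmetic slip is that $am-ma=[a,m]$ rather than $[m,a]$, but since $\ad(A)$ and $\ad(\g)$ are the \emph{sets} $\{D_m:m\in A\}$ this sign is irrelevant to the argument.
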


This implies that genuine examples we are looking for cannot be too small:

\begin{cor}
If $\dim (A)\le 4$, then $\Sha_{\text{\rm{a}}}(A)=0$.
\end{cor}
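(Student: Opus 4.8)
The plan is to deduce this from Lemma \ref{ass-lie} together with the classification fact quoted in Remark \ref{nonzero}, namely that every almost inner derivation of a Lie algebra of dimension at most $4$ is inner. First I would observe that by Lemma \ref{ass-lie}(iv) we have $\Sha_{\text{\rm{a}}}(A)\subseteq\Sha(\g)$, where $\g=\Lie(A)$ is the associated Lie algebra, which has the same dimension as $A$. Hence it suffices to show $\Sha(\g)=0$ whenever $\dim\g\le 4$.

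So the second step is to explain why $\Sha(\g)=0$ for $\dim\g\le 4$. By \cite[Proposition~2.8]{BDV1}, cited in Remark \ref{nonzero}, every almost inner derivation of a Lie algebra of dimension less than $5$ is inner, i.e.\ $\AID(\g)=\ad(\g)$, so that $\Sha(\g)=\AID(\g)/\ad(\g)=0$. Combining this with the inclusion $\Sha_{\text{\rm{a}}}(A)\subseteq\Sha(\g)$ gives $\Sha_{\text{\rm{a}}}(A)=0$, completing the argument.

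I expect the only real content here to be the bookkeeping: there is no obstacle, since both ingredients are already available in the excerpt. One minor point to be careful about is the unitality convention — in this section $A$ is assumed unital, while the Lie-algebra statement of Remark \ref{nonzero} makes no such assumption; but $\Lie(A)$ is just the underlying space with the commutator bracket regardless, so the dimension count and the inclusion of Lemma \ref{ass-lie}(iv) go through verbatim. One could optionally remark that this bound is sharp in the Lie setting (there are $5$-dimensional nilpotent $\g$ with $\Sha(\g)\ne 0$, e.g.\ $\g_{5,3}$ of Remark \ref{nonzero}), though whether it is sharp for \emph{associative} $A$ is exactly the kind of question the subsequent discussion is driving at, so I would leave that aside.

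\begin{proof}
Let $\g=\Lie (A)$ be the Lie algebra associated with $A$; then $\dim_k \g=\dim_k A\le 4$.
By \cite[Proposition~2.8]{BDV1} (see Remark \ref{nonzero}), every almost inner derivation of a Lie algebra
of dimension less than $5$ is inner, so $\AID (\g)=\ad (\g)$ and hence $\Sha (\g)=0$.
By Lemma \ref{ass-lie}(iv), $\Sha_{\text{\rm{a}}}(A)\subseteq \Sha (\g)=0$, so $\Sha_{\text{\rm{a}}}(A)=0$.
\end{proof}
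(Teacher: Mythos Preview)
Your proof is correct and follows essentially the same approach as the paper: set $\g=\Lie(A)$, invoke \cite[Proposition~2.8]{BDV1} (via Remark~\ref{nonzero}) to get $\Sha(\g)=0$ since $\dim\g\le 4$, and then apply Lemma~\ref{ass-lie}(iv). The only difference is that you spell out the intermediate equality $\AID(\g)=\ad(\g)$ and the dimension identification explicitly, which is harmless.
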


\begin{proof}
By \cite[Proposition~2.8]{BDV1}, for $\g =\Lie(A)$ we have $\Sha (\g)=0$
(see Remark \ref{nonzero} above).
The assertion now follows from Lemma \ref{ass-lie}(iv).
\end{proof}



\subsection{Multiplicative $\Sha (A)$} \label{subsec:mult}

Let $G=\Aut_k(A)$ be the group of all $k$-algebra automorphisms of $A$. In the sequel,
we shorten $\Aut_k(A)$ to $\Aut(A)$. Let $A^{\times}$ denote
the group of invertible elements of $A$. Denote by
$\Inner (A)$ the group of inner automorphisms of $A$. Recall that
$\varphi\in \Inner (A)$ if there exists $a\in A^{\times}$ such that
$\varphi(x)=axa^{-1}$. $\Inner (A)$ is a normal subgroup of $\Aut (A)$.

\begin{definition}
Define
$$
\AIAut(A):=\{\varphi\in\Aut (A) \quad | \quad (\forall x\in A) \quad  (\exists a\in A^{\times}) \quad \varphi(x)=axa^{-1}\}.
$$
(Here $a$ may depend on $x$.) We call elements of $\AIAut (A)$ {\bf{almost inner automorphisms}} of $A$.
\end{definition}

Clearly, $\Inner (A)$ is a normal subgroup of $\AIAut (A)$.

\begin{definition}
The group $$\Sha_{\text{\rm{m}}}(A):=\AIAut(A)/\Inner (A)$$
is called the {\bf {multiplicative Tate--Shafarevich group}} of $A$.
\end{definition}

As in Section \ref{subsec:add}, we first make sure that there exist $A$ with
$\Sha_{\text{\rm{m}}}(A)\ne 0$. The following example (provided by Be'eri
Greenfeld) is parallel to Example \ref{ex:add}.

\begin{example} \label{ex:mult}
Let $A$ be the algebra of (countably) infinite matrices $S$ over
$k$ which are eventually scalar (namely, for $i+j\gg 1, S(i,j)=\lambda \delta_{i,j}$ for some $\lambda\in k$).
Consider the automorphism of $A$ induced by conjugation by an infinite diagonal matrix
$\diag(\lambda_1,\lambda_2,\dots)$ with distinct nonzero $\lambda_i$'s.
This is an almost inner automorphism of $A$ which is not inner. Hence $\Sha_{\text{\rm{m}}}(A)\ne 0$.
\end{example}

\begin{remark}
Both Examples \ref{ex:add} and \ref{ex:mult} are reminiscent of a similar well-known construction arising in the group-theoretic
set-up. Namely, let $G=\FSym(\Omega )$ be a finitary symmetric group (the group of all permutations of an infinite set $\Omega$
fixing all but finitely many elements of $\Omega$). Viewing $G$ as a subgroup of the symmetric group $\Sym (\Omega)$, consider
an automorphism $\varphi\colon G\to G$ induced by conjugation by some $a\in \Sym (\Omega)\setminus \FSym (\Omega)$. Clearly,
$\varphi$ is almost inner but not inner. Actually, in this case $\AIAut(G)/\Inner(G)$ is isomorphic to the infinite simple group
$\FSym(\Omega)/\Sym (\Omega)$ (this observation is attributed to Passman, see \cite[Introduction]{Sah}), and  $\Sha(G)$ is even
larger because there are non-surjective almost inner endomorphisms \cite{AE}.
\end{remark}

As in Section \ref{subsec:add}, we are interested in exhibiting examples of finitely generated (or even finite-dimensional) algebras $A$
with nontrivial $\Sha_{\text{\rm{m}}}(A)$.

In the case $A=U(\g)$, considered in Section \ref{subsec:add} in the context of the additive $\Sha$,
we did not succeed in presenting an example of $\g$ with $\Sha_{\text{\rm{m}}}(U(\g))\ne 0$.

Consider finite-dimensional algebras $A$. In this case, $G$ can be equipped
with a structure of an affine algebraic $k$-group (not necessarily connected).
Let $G_A$ denote its identity component, it is a closed, connected, normal subgroup of finite index in $G$.
Since the field $k$ is of characteristic zero, the Lie algebra $\Der (A)$ is isomorphic to
$\Lie (G)=\Lie (G_A)$. The group of inner automorphisms $\Inner (A)$ is a closed, connected, normal
subgroup of $G$, so that the group of outer automorphisms $G/\Inner (A)$ is well
defined and also acquires the structure of an affine algebraic $k$-group, and the Lie algebra
$\Lie (G/\Inner (A))$ is isomorphic to $\Out (A)=\Der(A)/\ad(A)$, the Lie algebra of outer derivations of $A$;
see, e.g. \cite[Corollary~13.2]{Hu}, \cite[Proposition~3.1]{Str}.

Recently, this structure attracted considerable attention, see
\cite{CSS}, \cite{ER}, \cite{LRD}, \cite{RDSS} and the references therein.
It is an invariant of the derived equivalence class
of $A$ and is related to the representation type of $A$.

It would be interesting to understand whether one can use the multiplicative and additive $\Sha (A)$ in
this circle of problems. First, one has to answer some basic questions. Recall that we assume $A$ to be a finite-dimensional
associative unital algebra over a field $k$ of characteristic zero.

\begin{lemma}
$\AIAut (A)$ is a normal subgroup of $\Aut (A)$.
\end{lemma}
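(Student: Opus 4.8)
The plan is to mimic the group-theoretic argument of Lemma \ref{lem:norm} verbatim, replacing conjugation by an element of $G$ with conjugation by an element of $A^{\times}$. Concretely, let $\sigma\in\AIAut(A)$, let $\varphi\in\Aut(A)$, and let $x\in A$. We want to show that $\varphi^{-1}\sigma\varphi\in\AIAut(A)$, i.e. that $(\varphi^{-1}\sigma\varphi)(x)$ equals $bxb^{-1}$ for some $b\in A^{\times}$ depending on $x$.

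The key step is the following computation. Since $\sigma$ is almost inner and $\varphi(x)\in A$, there exists $a\in A^{\times}$ with $\sigma(\varphi(x))=a\varphi(x)a^{-1}$. Applying the ring homomorphism $\varphi^{-1}$ and using that $\varphi^{-1}$ is multiplicative and $\varphi^{-1}(\varphi(x))=x$, we get
$$
(\varphi^{-1}\sigma\varphi)(x)=\varphi^{-1}\bigl(a\varphi(x)a^{-1}\bigr)=\varphi^{-1}(a)\,x\,\varphi^{-1}(a^{-1}).
$$
Here I would note two small points that make this rigorous: first, $\varphi$ being a $k$-algebra automorphism sends $A^{\times}$ onto $A^{\times}$, so $\varphi^{-1}(a)\in A^{\times}$; second, since $\varphi^{-1}$ is a homomorphism, $\varphi^{-1}(a^{-1})=\varphi^{-1}(a)^{-1}$. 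Setting $b:=\varphi^{-1}(a)\in A^{\times}$ gives $(\varphi^{-1}\sigma\varphi)(x)=bxb^{-1}$, and since $b$ depends only on $a$ (which depends on $x$), this shows $\varphi^{-1}\sigma\varphi$ is almost inner. Because $\varphi^{-1}\sigma\varphi$ is also an automorphism (being a composition of automorphisms), it lies in $\AIAut(A)$.

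Finally, I would observe that $\Inner(A)$ being a normal subgroup of $\AIAut(A)$ was already recorded, and $\AIAut(A)\subseteq\Aut(A)$ is a subgroup (closure under composition and inverses is again a routine check of the same flavour, or can be cited as the reason $\Sha_{\text{\rm{m}}}(A)$ was defined as a group). So the content of the lemma is exactly the normality, which the displayed conjugation identity establishes.

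There is essentially no obstacle here: the only thing to be careful about is that $A$ is not a group and one is conjugating by units, so one must confirm that automorphisms preserve $A^{\times}$ and commute with taking inverses of units — both immediate since algebra automorphisms preserve the identity and products. The argument does not use finite-dimensionality, unitality, or $\ch k=0$; it works for any associative unital $k$-algebra.
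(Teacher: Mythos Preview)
Your proof is correct and is exactly what the paper intends: its own proof reads ``One has to repeat, word for word, the proof of Lemma~\ref{lem:norm},'' and you have carried out precisely that repetition, with the appropriate replacement of group conjugation by conjugation by units of $A$.
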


\begin{proof}
One has to repeat, word for word, the proof of Lemma \ref{lem:norm}.
\end{proof}

\begin{cor}
$\AIAut (A)$ is a normal subgroup of $G_A$. \qed
\end{cor}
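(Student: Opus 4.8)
The statement to prove is the Corollary: $\AIAut(A)$ is a normal subgroup of $G_A$, where $G_A$ is the identity component of $\Aut(A)$.

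The plan is straightforward. We already know from the preceding Lemma that $\AIAut(A)$ is a normal subgroup of $\Aut(A)$ (which follows by repeating the proof of Lemma \ref{lem:norm}). The key observation is that a subgroup normal in a larger group is automatically normal in any subgroup it is contained in — so the only content is to verify the inclusion $\AIAut(A) \subseteq G_A$. So first I would check that inclusion, and then conclude normality.

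To prove $\AIAut(A) \subseteq G_A$: we have the chain $\Inner(A) \trianglelefteq \AIAut(A) \trianglelefteq \Aut(A)$. Since $A$ is finite-dimensional over a field of characteristic zero, $\Inner(A)$ is a closed, connected, normal subgroup of $G = \Aut(A)$ (this is recalled just above in the excerpt), hence $\Inner(A) \subseteq G_A$. Thus it suffices to show that $\AIAut(A)/\Inner(A) = \Sha_{\text{\rm{m}}}(A)$ embeds into $G/G_A$, or more directly, that every element of $\AIAut(A)$ lies in $G_A$. Here the natural approach is: the quotient $G/G_A$ is a finite group, and the image of $\AIAut(A)$ in $G/G_A$ is a subgroup isomorphic to $\AIAut(A)/(\AIAut(A)\cap G_A)$. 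One wants to see this image is trivial. The cleanest argument I would use: an almost inner automorphism $\varphi$ induces the identity on the Hochschild cohomology $HH^1(A)$ (equivalently on $\Out(A) = \Lie(G/\Inner(A))$) and, more relevantly, on each $\g$-module in the relevant sense; but the truly decisive point is that $\AIAut(A)$ acts trivially on the \emph{finite} set of conjugacy-like data distinguishing components — concretely, since each $\varphi \in \AIAut(A)$ is "locally inner", it cannot permute the connected components nontrivially. I would make this precise by noting that $G_A$ contains $\Inner(A)$ and that $\AIAut(A)$ is generated over $\Inner(A)$ by automorphisms each of which is a pointwise limit (in the Zariski sense) of inner ones, forcing them into the connected component. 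Alternatively, and perhaps most economically: I would show $\Sha_{\text{\rm{m}}}(A) = \AIAut(A)/\Inner(A)$ is a \emph{connected} algebraic group (it is a closed subgroup of the connected group $G_A/\Inner(A)$ once we know the inclusion, but to get the inclusion we can instead argue that $\AIAut(A)$ is itself Zariski-closed and that its identity component already surjects onto it because being almost inner is inherited by limits, hence $\AIAut(A)$ is connected, hence contained in $G_A$).

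I expect the main obstacle to be precisely the justification of $\AIAut(A) \subseteq G_A$, i.e. ruling out that an almost inner automorphism could lie in a non-identity component of $\Aut(A)$. The honest shortest route, given what the excerpt grants us, is: $\AIAut(A)$ is a closed subgroup of $G$ (by the same argument used at Step 3 of Theorem \ref{th:ideal-lie}, or directly), it contains the connected group $\Inner(A)$, and one checks it is connected — e.g. because if $\varphi \in \AIAut(A)$ then the map $A \to A$, $x \mapsto \varphi(x)$ agrees pointwise with conjugations, and a standard argument (as in \cite{GW}) produces a curve in $\AIAut(A)$ joining $\varphi$ to $\idd$ through automorphisms of the form obtained by scaling the "Cartan-type" action, showing $\AIAut(A)$ is path-connected hence contained in $G_A$. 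Once $\AIAut(A) \subseteq G_A$ is in hand, normality in $G_A$ is immediate from normality in the ambient $\Aut(A)$ (the preceding Lemma), and there is nothing more to do. I would therefore present the proof as: "By the previous Lemma, $\AIAut(A) \trianglelefteq \Aut(A)$. Since $\AIAut(A)$ is connected and contains the identity, $\AIAut(A) \subseteq G_A$. A normal subgroup of $\Aut(A)$ contained in $G_A$ is a fortiori normal in $G_A$, which proves the claim." — with the connectedness of $\AIAut(A)$ being the one point that needs a line or two of justification, referring to the argument of \cite{GW} reused in the proof of Theorem \ref{th:ideal-lie}.
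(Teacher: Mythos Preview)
The paper gives no proof at all: the corollary carries only a \qed\ and is treated as an immediate consequence of the preceding Lemma ($\AIAut(A)\trianglelefteq\Aut(A)$). You are right that this does \emph{not} follow from that Lemma alone: one must also know $\AIAut(A)\subseteq G_A$, and the paper never argues this (the very next sentence asks whether $\AIAut(A)$ is \emph{closed} in $G_A$, presupposing the inclusion). So you have correctly put your finger on the only substantive point.

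However, your attempts to supply that missing step are not sound. You repeatedly invoke Zariski-closedness of $\AIAut(A)$ (``$\AIAut(A)$ is itself Zariski-closed \ldots hence connected, hence contained in $G_A$''), but closedness is exactly the open Question that immediately follows the Corollary in the paper---so you cannot use it. Your alternative, producing a curve in $\AIAut(A)$ from $\varphi$ to the identity ``as in \cite{GW}'', is the argument the paper uses only under a nilpotency hypothesis (Step~3 of Theorem~\ref{th:ideal-lie}, and again in Theorem~\ref{th:GW}); that hypothesis is not available here, and the Gordon--Wilson deformation genuinely relies on it. The remaining sketches (triviality of the induced action on $HH^1$, pointwise-limit heuristics) are not made into proofs.

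In summary: the paper's ``proof'' is empty; you identify the real content and attempt more than the authors do, but your justification of $\AIAut(A)\subseteq G_A$ has a genuine gap, and at present neither your write-up nor the paper supplies a valid argument for that inclusion in the generality stated.
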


\begin{question} \label{q:closed}
Is $\AIAut (A)$ a closed subgroup of $G_A$?
\end{question}

We see no reason to have an affirmative answer for an arbitrary algebra $A$. See, however,
Theorem \ref{th:GW} below.

Clearly, $\Inner (A)$ is a closed, connected, normal subgroup of $\Aut (A)$,
so that if for a certain algebra $A$ Question \ref{q:closed} is answered in the affirmative, then $\Sha_{\text{\rm{m}}}(A)$
becomes a closed subgroup of $\Out (A)$, thus acquiring the structure of an affine algebraic $k$-group.
This gives rise to the following observation. 

\begin{lemma} \label{lem:closed}
Suppose that $\AIAut (A)$ is a closed subgroup of $\Aut (A)$.
Then the Lie algebras $\Lie (\Sha_{\text{\rm{m}}}(A))$ and $\Sha_{\text{\rm{a}}}(A)$
are isomorphic.
\end{lemma}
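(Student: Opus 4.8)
The plan is to compare the two Lie algebras by identifying both of them, inside $\Der(A)$, with the "almost inner" tangent directions, and then to use the algebraic-group dictionary from the preceding discussion. First, recall that under the standing hypotheses the group $\Aut(A)$ is an affine algebraic $k$-group with $\Lie(\Aut(A))\cong\Der(A)$, and $\Inner(A)$ is a closed connected normal subgroup with $\Lie(\Inner(A))\cong\ad(A)$. If $\AIAut(A)$ is closed in $\Aut(A)$, then (being also normal by the lemma just proved) it is an affine algebraic subgroup, and $\Lie(\AIAut(A))$ is a Lie subalgebra of $\Der(A)$ containing $\ad(A)$; moreover $\Sha_{\text{\rm{m}}}(A)=\AIAut(A)/\Inner(A)$ is an affine algebraic $k$-group with $\Lie(\Sha_{\text{\rm{m}}}(A))\cong \Lie(\AIAut(A))/\ad(A)$. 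So it suffices to prove the identification of Lie algebras $\Lie(\AIAut(A))\cong\AID(A)$; the desired isomorphism $\Lie(\Sha_{\text{\rm{m}}}(A))\cong\Sha_{\text{\rm{a}}}(A)$ then follows by passing to the quotient by $\ad(A)=\Lie(\Inner(A))$.

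For the inclusion $\Lie(\AIAut(A))\subseteq\AID(A)$, I would argue pointwise. Fix $x\in A$ and consider the orbit map $\AIAut(A)\to A$, $\varphi\mapsto\varphi(x)$. Since each $\varphi\in\AIAut(A)$ satisfies $\varphi(x)=a_\varphi x a_\varphi^{-1}$ for some $a_\varphi\in A^\times$, the image of this orbit map lies in the conjugacy class $\{axa^{-1}: a\in A^\times\}$, i.e.\ in the image of the orbit map $A^\times\to A$, $a\mapsto axa^{-1}$. Differentiating at the identity, the tangent space to the $\AIAut(A)$-orbit of $x$ is contained in the tangent space to the $A^\times$-orbit of $x$, which is exactly $\{mx-xm: m\in A\}=\{\,[m,x]\,\}$ (here one uses that $\Lie(A^\times)=A$ with the commutator action, valid since $A$ is finite-dimensional and $\ch k=0$, so the orbit is a locally closed smooth subvariety whose tangent space is the image of the differential of the orbit map). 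Hence for any $D\in\Lie(\AIAut(A))\subseteq\Der(A)$ we get $D(x)\in\{mx-xm:m\in A\}$, i.e.\ $D\in\AID(A)$.

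For the reverse inclusion $\AID(A)\subseteq\Lie(\AIAut(A))$, I would exhibit, for each $D\in\AID(A)$, a one-parameter family inside $\AIAut(A)$ with tangent vector $D$. The natural candidate is $\exp(tD)\in\Aut(A)$ (which makes sense formally, and is an honest element of the algebraic group $\Aut(A)$ for $t$ in a suitable one-dimensional unipotent or torus subgroup when $D$ is nilpotent or semisimple respectively); one must check $\exp(tD)\in\AIAut(A)$ for all $t$. This is the point I expect to be the main obstacle: almost-innerness of $D$ is a pointwise condition ("for each $x$ there exists $m$"), and it is not automatic that exponentiating preserves it, because the element $m$ witnessing $D(x)=mx-xm$ need not assemble into a group element conjugating $x$ to $\exp(tD)(x)$. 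The cleanest route is to reduce to the closed-subgroup hypothesis itself: since $\AIAut(A)$ is assumed closed in $\Aut(A)$, its Lie algebra is saturated, meaning $\Lie(\AIAut(A))=\{D\in\Der(A): \exp(tD)\in\AIAut(A)\ \forall t\}$ only tautologically, so instead I would argue that $\AID(A)$ is itself an algebraic Lie subalgebra (it is the intersection over $x\in A$ of the conditions $D(x)\in[A,x]$, each a closed linear condition on $D$, hence $\AID(A)$ is a Zariski-closed linear subspace, and it is a subalgebra by the Proposition proved above), and that the algebraic subgroup it generates lies in $\AIAut(A)$; combined with the first inclusion and dimension count (both $\Lie(\AIAut(A))$ and $\AID(A)$ are algebraic subalgebras of $\Der(A)$, one contained in the other, and the connected algebraic subgroup of $\Aut(A)$ with Lie algebra $\AID(A)$ consists of almost inner automorphisms since a connected group is generated by its one-parameter subgroups and $\exp(tD)$ for $D\in\AID(A)$ can be checked to be almost inner using that $[A,x]$ is $\exp(tD)$-stable), this forces equality. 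The verification that $\exp(tD)$ is almost inner for $D\in\AID(A)$ — equivalently, that $\exp(tD)(x)$ lies in the $A^\times$-conjugacy class of $x$ for all $x$ — is where the real work is, and it rests on the observation that $D$ maps the subspace $[A,x]$ into itself (since $D$ is a derivation and $D(x)\in[A,x]$, one computes $D([a,x])=[D(a),x]+[a,D(x)]\in[A,x]$), so $\exp(tD)$ stabilizes the tangent direction of the conjugacy orbit at $x$ and hence carries $x$ along that orbit.
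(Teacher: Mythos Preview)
Your overall strategy is the same as the paper's: reduce to the identification $\Lie(\AIAut(A))=\AID(A)$ inside $\Der(A)$ and then pass to the quotient by $\ad(A)=\Lie(\Inner(A))$. The paper is much terser than you are --- it simply asserts that under the standard Lie correspondence (citing Humphreys) inner automorphisms correspond to inner derivations and almost inner automorphisms correspond to almost inner derivations, and thus $\Lie(\AIAut(A))\cong\AID(A)$; no further argument is given. Your orbit-map argument for the inclusion $\Lie(\AIAut(A))\subseteq\AID(A)$ is a correct and welcome elaboration of what the paper leaves implicit.

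The reverse inclusion, however, contains a genuine gap in your write-up. From $D([A,x])\subseteq[A,x]$ you conclude that ``$\exp(tD)$ stabilizes the tangent direction of the conjugacy orbit at $x$ and hence carries $x$ along that orbit.'' The ``hence'' is unjustified: preserving the tangent space at a \emph{single} point $x$ (as a linear subspace) tells you only that $\exp(tD)(x)\in x+[A,x]$, the affine tangent space, not that it lies on the conjugacy orbit, which is a curved subvariety. What is actually needed --- and is available --- is that $D$ is tangent to the orbit at \emph{every} one of its points: for any $y=axa^{-1}$ one has $D(y)\in[A,y]=T_y(\text{orbit})$, simply because $D\in\AID(A)$. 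With tangency along the whole orbit one can then invoke the standard flow/integrability argument (immediate over $\mathbb{R}$ or $\mathbb{C}$; requiring more care over an arbitrary field of characteristic zero, where your parenthetical remarks about $\exp(tD)$ for non-nilpotent $D$ already hint at the difficulty). The paper does not work any of this out either --- it hides it in the citation --- and only in the subsequent nilpotent case (Theorem~\ref{th:GW}) does it point to the Gordon--Wilson argument where the details are carried through.
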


\begin{proof}
Under the standard correspondence between algebraic groups and Lie algebras,
which takes elements of $G$ (=automorphisms of $A$) to derivations of $A$ as mentioned above
(see \cite[Section~10.7 and Corollary~13.2]{Hu}), inner automorphisms of $A$ go to inner
derivations of $A$ and similarly, almost inner automorphisms of $A$ go to almost inner
derivations of $A$. Since the characteristic of $k$ is zero, this correspondence gives rise
to isomorphisms of Lie algebras $\Lie(\Inner (A))\cong \ad (A)$, $\Lie(\AIAut(A))\cong \AID(A)$
(again, see \cite[Corollary~13.2]{Hu}), hence
$$
\begin{aligned}
\Lie (\Sha_{\text{\rm{m}}}(A)) & = \Lie(\AIAut(A)/\Inner(A))\cong \Lie(\AIAut(A))/\Lie(\Inner(A))
\cong \AID(A)/\ad (A)\\
&=\Sha_{\text{\rm{a}}}(A),
\end{aligned}
$$
which proves the lemma. \end{proof}

Thus, under the assumptions of Lemma \ref{lem:closed}, any eventual example of an algebra $A$ with nonzero $\Sha (A)$,
either additive or multiplicative, would immediately yield a required example for the other structure.

Here is an important special case.

\begin{theorem} \label{th:GW}
Let $k=\mathbb C$. With the notation as above, assume in addition that the algebraic $k$-group $G_A$ is nilpotent. Then
\begin{itemize}
\item[(i)] $\AIAut(A)$ is a closed normal subgroup of $G_A$ with Lie algebra $\AID (A)$;
\item[(ii)] the Lie algebras $\Lie (\Sha_{\text{\rm{m}}}(A))$ and $\Sha_{\text{\rm{a}}}(A)$
are isomorphic.
\end{itemize}
\end{theorem}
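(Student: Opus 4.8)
\textbf{Proof proposal for Theorem \ref{th:GW}.}
The plan is to reduce everything to Theorem \ref{th:ideal-lie} and Lemma \ref{lem:closed}, mimicking the five-step strategy of the nilpotent Lie algebra case. For part (i), I would first observe that $G_A$ is a connected nilpotent affine $\mathbb C$-group, hence in particular it is algebraic with $\Lie(G_A)\cong\Der(A)$, and $\AIAut(A)$ is, by the preceding Corollary, a normal subgroup of $G_A$; the content to be proved is that it is \emph{closed}. Here I would import the Gordon--Wilson argument verbatim, exactly as it was used at Step 3 of the proof of Theorem \ref{th:ideal-lie}: the set of almost inner automorphisms of $A$ is cut out inside $\Aut(A)$ by algebraic conditions (for each $x$ in a basis, the constraint that $\varphi(x)$ lie in the conjugacy ``orbit'' closure $\{axa^{-1}:a\in A^\times\}$ translated into the rank/membership conditions used by Gordon--Wilson), so $\AIAut(A)$ is Zariski closed. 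Once $H:=\AIAut(A)$ is a closed subgroup of the nilpotent group $G_A$, it is automatically connected (closed subgroups of connected nilpotent algebraic groups over a field of characteristic zero are connected), and by \cite[10.2, Cor.~A]{Hu} (normal closed subgroup) $\Lie(H)$ is an ideal of $\Lie(G_A)=\Der(A)$. It remains to identify $\Lie(H)$ with $\AID(A)$: the correspondence between $\Aut(A)$ and $\Der(A)$ carries $\AIAut(A)$ onto $\AID(A)$ by the same differentiation argument invoked in the proof of Lemma \ref{lem:closed} (differentiate a one-parameter subgroup of almost inner automorphisms to get an almost inner derivation, and exponentiate back), so $\Lie(H)\cong\AID(A)$, proving (i).

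Part (ii) is then immediate: once (i) establishes that $\AIAut(A)$ is a closed subgroup of $G_A$, hence of $\Aut(A)$, the hypothesis of Lemma \ref{lem:closed} is satisfied, and that lemma gives $\Lie(\Sha_{\text{\rm{m}}}(A))\cong\Sha_{\text{\rm{a}}}(A)$ directly.

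I expect the main obstacle to be the closedness claim in (i). The Lie-group version in \cite{GW} is phrased for simply connected nilpotent Lie groups, and one must check that the argument --- which shows that the ``almost inner'' condition is equivalent to finitely many Zariski-closed conditions obtained by intersecting, over a basis of $A$, the constructible sets $\{(\varphi,x)\mapsto \varphi(x)\in\overline{\mathcal{O}_x}\}$ where $\mathcal{O}_x$ is the conjugation orbit of $x$ --- goes through for the affine algebraic group $\Aut(A)$ acting on the finite-dimensional vector space $A$ over $\mathbb C$. Since $k=\mathbb C$, the transition from the Lie-group setting to the algebraic-group setting is harmless, exactly as was already noted at Steps 2 and 3 of the proof of Theorem \ref{th:ideal-lie}; the only genuine point requiring care is that for associative algebras the relevant ``orbit'' is the conjugation action of the unit group $A^\times$ rather than of a full Lie group, but $A^\times$ is itself a Zariski-open subset of the affine space $A$, its conjugation orbits are constructible, and their closures are closed, so the same packaging of conditions applies. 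Modulo this verification, everything else is a routine transcription of the nilpotent Lie algebra proof.
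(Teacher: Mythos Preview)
Your proposal follows essentially the same route as the paper, which disposes of (i) by saying that the Gordon--Wilson argument goes through \emph{mutatis mutandis} (exactly as at Steps~2--5 of Theorem~\ref{th:ideal-lie}) and then obtains (ii) by combining (i) with Lemma~\ref{lem:closed}. So the overall architecture is right.

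One inaccuracy to fix: your assertion that ``closed subgroups of connected nilpotent algebraic groups over a field of characteristic zero are connected'' is false --- already $\mu_n\subset\mathbb{G}_m$ is a counterexample, and more generally a connected nilpotent algebraic group over $\mathbb C$ decomposes as a torus times a unipotent group, and the torus factor has plenty of finite closed subgroups. Fortunately this step is superfluous: the theorem does not claim connectedness of $\AIAut(A)$, and neither the fact that $\Lie(H)$ is an ideal of $\Der(A)$ (which follows from normality alone via \cite[10.2, Cor.~A]{Hu}) nor the identification $\Lie(H)\cong\AID(A)$ requires it. Simply delete the connectedness claim and the argument stands.
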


\begin{proof}
As in Theorem \ref{th:ideal-lie}, the proof of (i) follows, {\it mutatis mutandis}, the proof of Theorem 2.3 in \cite{GW}.


We obtain (ii) by combining (i) with Lemma \ref{lem:closed}.
\end{proof}

\begin{remark}
So far it is not clear whether there exists $A$ fitting into the frame
of Theorem \ref{th:GW} and providing an example with nonzero $\Sha (A)$.
One can try to produce such an $A$ using the results of R.~D.~Pollack \cite{Po}, particularly
Theorem 1.6 and Example 1.7.
\end{remark}

\section{Concluding parallels} \label{sec:par}

\subsection{Structure properties}
Actually, very little is known on the structure properties of the Tate--Shafarevich groups and
algebras considered above. As of now, main vague parallels arise from looking at $\Sha(G)$ of
finite groups $G$, see \cite{Ku2} for more details. 

Here are some basic questions. Throughout we assume that $\g$ is a finite-dimensional Lie algebra and
$A$ is a finite-dimensional associative unital algebra. As to the characteristic of the ground field,
in this section we shall consider two separate cases, following a suggestion of the referee.

\subsubsection{Case $\ch (k)=0$}

\begin{question}
\begin{itemize}
\item[]
\item[(i)] Does there exist $\g$ such that the algebra $\Sha (\g)$ is non-abelian?
\item[(ii)] Does there exist $A$ such that the algebra $\Sha_{\rm{a}}(A)$ is non-abelian?
\item[(iii)] Does there exist $A$ such that the group $\Sha_{\rm{m}}(A)$ is non-abelian?
\end{itemize}
\end{question}

Recall that Sah \cite{Sah} disproved Burnside's statement \cite{Bur2} and exhibited examples of $p$-groups $G$ with
non-abelian $\Sha (G)$, the smallest among them is a group of order $2^{15}$.

Our working hypothesis is that all these questions are answered in the affirmative.

\begin{question} \label{q}
\begin{itemize}
\item[]
\item[(i)] Does there exist $\g$ such that the algebra $\Sha (\g)$ is non-solvable?
\item[(ii)] Does there exist $A$ such that the algebra $\Sha_{\rm{a}}(A)$ is non-solvable?
\item[(iii)] Does there exist $A$ such that the group $\Sha_{\rm{m}}(A)$ is non-solvable?
\end{itemize}
\end{question}

Here we would rather expect that all Tate--Shafarevich algebras and groups appearing in these
questions are solvable. Note that even in the case of finite groups $G$ only a conditional
statement is available. The proof of the solvability in \cite{Sah} contains a gap noticed
by Murai \cite{Mu} who showed that the validity of this assertion depends on the Alperin--McKay conjecture.

\subsubsection{Case $\ch (k)=p>0$}
In this case, one might expect a very different behaviour of all versions of $\Sha$.
A general reason is that in a situation where an algebra has no outer derivations
in characteristic zero, it may have such in positive characteristic. Here are several
instances of this phenomenon.

$\bullet$ All derivations of the group algebra $k[G]$ of a finite group $G$ are inner
if $\ch (k)=0$ but there are outer derivations in the modular case, see, e.g. \cite{ArKo}.

$\bullet$ All derivations of a simple finite-dimensional Lie $k$-algebra are inner
(first Whitehead's lemma) but there are outer derivations in the modular case, even
for classical Lie algebras, where such algebras are classified for $p>3$, see \cite[Lemma~2.7]{BGP}.
There are more examples of the same flavour for non-classical simple algebras, as well as
for classical algebras in characteristics 2 and 3. The latter cases provide algebras $\g$
with non-solvable $\Out(\g)$ thus refuting a conjecture of Zassenhaus. See \cite{BM}
for details and a more recent paper \cite{BMPG} for an infinite family of counter-examples
in characteristic 3.

It is not clear what are the Tate--Shafarevich algebras in all these exceptional cases.
In light of the aforementioned counter-examples to the Zassenhaus conjecture, it would
be particular tempting to answer Question \ref{q}(i) in this set-up.
This is an interesting topic for future research, and we thank the referee for bringing
the aforementioned examples to our attention.

\subsection{Eventual generalizations} It is tempting to extend the notions introduced in this paper
to other algebraic structures for which there exists a developed cohomology theory, with a goal to
define, explore and apply analogues of Tate--Shafarevich sets to relevant problems of
different categories. One has to try to equip these sets, if possible, with an additional structure (group
or algebra). Also, it is very desirable to include the structure under consideration in a relevant triad,
if such exists, similarly to the classical triad consisting of Lie algebras, associative algebras and groups.

A prototypical example where one can observe these structures is the set of $(n\times n)$-matrices
giving rise to the Lie algebra $\g=\mathfrak{gl}_n(k)$ if equipped by the bracket $(X,Y)\mapsto XY-YX$, to the
associative algebra $A={\mathrm M}_n(k)$ if equipped by matrix multiplication, and to the group $G={\mathrm{GL}}_n(k)$
of the invertible elements of the latter algebra. One has to emphasize, however, that each of the three objects has
trivial Tate--Shafarevich set. Indeed, the Skolem--Noether theorem implies this for $\Sha_{\textrm{a}}(A)$ and $\Sha_{\textrm{m}}(A)$
because of the absence of outer automorphisms and derivations, $\Sha(\g)=0$ by first Whitehead's lemma, and $\Sha(G)=1$
by a theorem of Hideo Wada \cite{Wa}. To complete the picture, one can mention a theorem of Feit and Seitz
\cite{FS} stating that $\Sha(S)=1$ for any finite simple nonabelian group $S$. Thus, looking for eventual analogies, one
has to leave the realm of simple (or, more generally, semisimple) objects in favour of the study of nilpotent ones.

Below we list some possible situations where the generalizations we are looking for seem reachable.

\medskip

$\bullet$ {\it Malcev algebras}

Malcev algebras arise from Lie algebras when one relaxes the Jacobi identity replacing it with a weaker condition,
and keeps the anti-commutativity, see, e.g. \cite{Sag}, \cite{KS}.
One can start with derivations of such an algebra $M$, where inner derivations are defined as in \cite{Sch},
and introduce almost inner derivations.
The arising set $\Sha (M)$ carries a structure of vector space but not necessarily a structure of Lie algebra.
The relevant triad to be considered should include alternative algebras (as a substitute for associative algebras) and
Moufang loops (as a substitute for groups). Note that analogues of Lie theorems in this set-up are available, see the paper
of Kerdman \cite{Ke} and the references therein. As in the classical case, the `Lie correspondence' between Moofang loops
and Malcev algebras works particularly well in the nilpotent case, see \cite{GRSS}.

\medskip

$\bullet$ {\it Leibniz algebras}

Leibniz algebras arise from Lie algebras in an opposite way, when one keeps the Jacobi identity and drops
the anti-commutativity condition, see, e.g. \cite{Lo1}, \cite{AOR}. Here there is a well-developed (co)homology theory
\cite{LP}, \cite{Pi}, and the Leibniz adjoint cohomology $HL^1(L,L)$ of a Leibniz algebra $L$ is the
space of outer derivations of $L$, see \cite{LP}. One then can introduce almost inner derivations and $\Sha (L)$ as in the case of Lie algebras,
see \cite{AdKu} where the authors provide examples of $L$ with nonzero $\Sha (L)$.
The eventual triad should include dialgebras \cite{Lo3} (as a substitute for associative algebras)
and so-called `coquecigrues' \cite{Lo2}, \cite{JP}, whose existence is known for several classes of Leibniz algebras
and an analogue of Lie theory is established. Hopefully, $\Sha (L)$ may reveal some related geometric phenomena.

\medskip

$\bullet$ {\it Poisson algebras}

Recall that a Poisson algebra $A$ is equipped with structures of associative algebra and Lie algebra which are
related by the Leibniz identity. The Poisson adjoint cohomology $H^1_{\pi}(A)$ is the quotient
$\Der_{\pi}(A)/\Ham(A)$, where $\Der_{\pi}(A)$ is the Lie algebra of Poisson derivations (i.e. derivations of both
associative and Lie structures) and $\Ham (A)$ is the ideal of Hamiltonian derivations. 
As in the preceding cases, we can introduce almost inner derivations and define $\Sha (A)$.
Here one can hope to use the Duflo isomorphism \cite{PT} for establishing connections and analogies
with other versions of $\Sha$. We hope that this object admits a conceptual interpretation within the frame
of Poisson geometry, in the spirit of \cite{Wei}. But this is another story.

\medskip

\noindent{\it Acknowledgements.} We thank Be'eri Greenfeld, Alexei Kanel-Belov, Leonid Makar-Limanov and Pradeep Kumar Rai for useful
discussions on various aspects of this work. We are grateful to the anonymous referee for careful reading and thoughtful remarks.

\enddocument